\documentclass[reqno]{amsart}
\usepackage{hyperref}
\usepackage{amssymb}
\usepackage{amsmath}
\usepackage{colortbl}
\usepackage[pagewise]{lineno}
%\linenumbers

\begin{document}
\title[\hfilneg \quad \quad \quad \quad \quad \quad \quad \quad \quad \quad \quad \quad \quad \quad Anisotropic p-Laplace equation]
{Existence and Nonexistence results for anisotropic p-Laplace equation with singular nonlinearities}
\author[P. Garain]
{Prashanta Garain}

\address{Prashanta Garain \newline
Aalto University, Department of Mathematics and Systems Analysis, Otakaari 1, 02150 Espoo, Finland}
\email{pgarain92@gmail.com}

\keywords{Anisotropic p-Laplacian; Existence; Nonexistence; Stable solution 
\hfill\break\indent}
\subjclass[2010]{35A01, 35B53, 35J75}

\begin{abstract}
Let $p_i\geq 2$ and consider the following anisotropic $p$-Laplace equation
$$
-\sum_{i=1}^{N}\frac{\partial}{\partial x_i}\Big(\Big|\frac{\partial u}{\partial x_i}\Big|^{p_i-2}\frac{\partial u}{\partial x_i}\Big)=g(x)f(u),\,\,u>0\text{ in }\Omega.
$$
Under suitable hypothesis on the weight function $g$ we present an existence result for $f(u)=e^\frac{1}{u}$ in a bounded smooth domain $\Omega$ and nonexistence results for $f(u)=-e^\frac{1}{u}$ or $-(u^{-\delta}+u^{-\gamma}),$ $\delta,\gamma>0$ with $\Omega=\mathbb{R}^N$ respectively.
\end{abstract}       

\maketitle\numberwithin{equation}{section}
\newtheorem{theorem}{Theorem}[section]
\newtheorem{lemma}[theorem]{Lemma}
\newtheorem{Def}[theorem]{Definition}
\newtheorem{remark}[theorem]{Remark}
\newtheorem{Pro}{Proof}[section]
\newtheorem{corollary}[theorem]{Corollary}
\allowdisplaybreaks

\section{Introduction}
In this article we are interested in the question of existence of a weak solution to the following anisotropic $p$-Laplace equation
\begin{equation}\label{weaksoln}
-\sum_{i=1}^{N}\frac{\partial}{\partial x_i}\Big(\Big|\frac{\partial u}{\partial x_i}\Big|^{p_i-2}\frac{\partial u}{\partial x_i}\Big)=g(x)e^\frac{1}{u},\,\,u>0\text{ in }\Omega
\end{equation}
where $\Omega$ is a bounded smooth domain in $\mathbb{R}^N$ with $N\geq 3$ and $g\in L^1(\Omega)$ is nonnegative which is not identically zero.

Alongside we present nonexistence results concerning stable solutions to the following equation
\begin{equation}\label{main equation}
\begin{gathered}
-\sum_{i=1}^{N}\frac{\partial}{\partial x_i}\Big(\Big|\frac{\partial u}{\partial x_i}\Big|^{p_i-2}\frac{\partial u}{\partial x_i}\Big)=g(x)f(u)\;\;\mbox{in}\;\;\mathbb{R}^N,\,u>0\,\text{ in }\mathbb{R}^N
\end{gathered}
\end{equation}
where $f(u)$ is either $-(u^{-\delta}+u^{-\gamma})$ with $\delta,\gamma>0$ or $-e^\frac{1}{u}.$ The weight function $g\in L^1_{loc}(\mathbb{R}^N)$ is such that $g\geq c>0$ for some constant $c$.

Throughout the article, we assume that $p_i\geq 2.$ If $p_i=2$ for all $i$ and $g\equiv 1$ equation \eqref{main equation} becomes the Laplace equation
\begin{equation}\label{Laplace}
-\Delta u=f(u)\text{ in }\Omega. 
\end{equation} 
Observe that the nonlinearities in our consideration is singular in the sense that it blows up near the origin. Starting from the pioneering work of Crandall et al \cite{CRT} where the existence of a unique positive classical solution for $f(u)=u^{-\delta}$ with any $\delta>0$ has been proved for the problem \eqref{Laplace} with zero Dirichlet boundary value.
Lazer-Mckenna \cite{Lazer} observed that the above classical solution is a weak solution in $H_0^{1}(\Omega)$ iff $0<\delta<3.$ Boccardo-Orsina \cite{Boc} investigated the case of any $\delta>0$ concerning the existence of a weak solution in $H_{loc}^1(\Omega)$. Moreover existence and uniqueness of solutions for singular Laplace equation has been investigated by Canino-Degiovanni \cite{Caninonew}, Canino-Sciunzi  \cite{Caninonew1}. Canino et al \cite{Canino1} generalized the problem \eqref{Laplace} to the following singular $p$-Laplace equation
\begin{equation}\label{Singlap}
-\Delta_{p}u=\frac{f(x)}{u^\delta}\text{ in }\Omega,\,\,u>0\text{ in }\Omega,\,\,u=0\text{ on }\partial\Omega
\end{equation}
to obtain existence and uniqueness of weak solution for any $\delta>0$ under suitable hypothesis on $f$. For more details concerning singular problems reader can look at \cite{ARCOYA, BalJM, Radu} and the references therein.

Farina \cite{Farina1} settled the question of nonexistence of stable solution for the equation \eqref{Laplace} with $f(u)=e^u.$ There is a huge literature in this direction for various type of nonlinearity $f(u),$ reader can look at the nice surveys \cite{Farina2, Dupaigne}. For $f(u)=-u^{-\delta}$ with $\delta>0$ Ma-Wei \cite{jwei} proved that the equation \eqref{Laplace} does not admit any $C^1(\mathbb{R}^N)$ stable solution provided
$$
2\leq N<2+\frac{4}{1+\delta}\left(\delta+\sqrt{\delta^2+\delta}\right).
$$
Moreover many other qualitative properties of solutions has been obtained there.
Consider the weighted $p$-Laplace equation
\begin{equation}\label{plap}
-\text{div}\left(w(x)|\nabla u|^{p-2}\nabla u\right)=g(x)f(u)\text{ in }\mathbb{R}^N.
\end{equation}
For $w=g=1,$ Guo-Mei \cite{GuoMei} showed nonexistence results in $C^1(\mathbb{R}^N)$ for \eqref{plap}, provided $2\leq p<N<\frac{p(p+3)}{p-1}$ and $\delta>q_c$ where
$$
q_c=\frac{(p-1)[(1-p)N^2+(p^2+2p)N-p^2]-2p^2\sqrt{(p-1)(N-1)}}{(N-p)[(p-1)N-p(p+3)]}.
$$
By considering a more general weight $g\in L^1_{loc}(\mathbb{R}^N)$ such that $|g|\geq C|x|^a$ for large $|x|,$ Chen et al \cite{Chen} proved nonexistence results for the equation \eqref{plap}, provided $w=1$ and $2\leq p<N<\frac{p(p+3)+4a}{p-1}$ and $\delta>q_c$ where
$$
q_c=\frac{2(N+a)(p+a)-(N-p)[(p-1)(N+a)-p-a]-\beta}{(N-p)[(p-1)N-p(p+3)]},
$$ 
for
$$
\beta=2(p+a)\sqrt{(p+a)\Big(N+a+\frac{N-p}{p-1}\Big)}.
$$
Recently this has been extended for a general weight function $w$ in \cite{Garain2,Ple}. 

Our main motive in this article is to investigate such results in the framework of the anisotropic $p$-Laplace operator, which is non-homogeneous. Such operators appear in many physical phenomena, for example,  it reflects anisotropic physical properties of some reinforced materials \cite{Lions}, appears in image processing \cite{image}, to study the dynamics of fluids in anisotropic media when the conductivities of the media are different in each direction \cite{Diaz}. The first part of this article is devoted to the existence of a weak solution for the anisotropic problem \eqref{weaksoln}. Some recent works on singular anisotropic problems can be found in \cite{Aniso1, Aniso2}. The singularity $e^\frac{1}{u}$ is more singular in nature compared to $u^{-\delta}$ which protects one to obtain the uniform boundedness of $u_n$ as in \cite{Boc}. We overcome this difficulty using the domain approximation method following \cite{Perera}. In the second part we provide nonexistence results of stable solutions for the anisotropic $p$-Laplace equation \eqref{main equation} with the mixed singularities $-(u^{-\delta}+u^{-\gamma})$ and $-e^\frac{1}{u}.$ We employ the idea introduced in \cite{Farina1} to establish our main results stated in section 2 for which Caccioppoli type estimates (see section 5) will be the main ingredient. The main difficulty to obtain such estimates arises due to the nonhomogenity of the anisotropic $p$-Laplace operator which we overcome by choosing suitable test functions in the stability condition.

\section{Preliminaries}
In this section, we present some basic results in the anisotropic Sobolev space.

\textbf{Anisotropic Sobolev Space:} Let $p_i\geq 2$ for all $i$, then for any domain $D$ define the anisotropic Sobolev space by
$$
W^{1,p_i}(D)=\left\{v\in W^{1,1}(D):\frac{\partial v}{\partial x_i}\in L^{p_i}(D)\right\}
$$
and 
$$
W_{0}^{1,p_i}(D)=W^{1,p_i}(D)\cap W_{0}^{1,1}(D)
$$
endowed with the norm
$$
||v||_{W_{0}^{1,p_i}(D)}=\sum_{i=1}^{N}\Big|\Big|\frac{\partial v}{\partial x_i}\Big|\Big|_{L^{p_i}(D)}.
$$
The space $W^{1,p_i}_{loc}(D)$ is defined analogously. 

We denote by $\bar{p}$ to be the harmonic mean of $p_1,p_2,\cdots,p_N$ defined by
$$
\frac{1}{\bar{p}}=\frac{1}{N}\sum_{i=1}^{N}\frac{1}{p_i}
$$
and
$$
\bar{p}^{*}=\frac{N\bar{p}}{N-\bar{p}}.
$$
The following Sobolev embedding theorem can be found in \cite{castrothesis, Castro, Anisoemb}.
\begin{theorem}\label{Sobolevemb}
For any bounded domain $\Omega$, the inclusion map
$$
W_{0}^{1,p_i}(\Omega)\to L^r(\Omega)
$$
is continuous for every $r\in[1,\bar{p}^{*}]$ if $\bar{p}<N$ and for every $r\geq 1$ if $\bar{p}\geq N.$ 
Moreover, there exists a positive constant $C$ depending only on $\Omega$ such that for every $v\in W_{0}^{1,p_i}(\Omega)$
$$
||v||_{L^r(\Omega)}\leq C\prod_{i=1}^{N}||\frac{\partial v}{\partial x_i}||_{L^{p_i}(\Omega)},\,\,\forall\,\,r\in[1,\bar{p}^{*}].
$$
\end{theorem}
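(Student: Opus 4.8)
The statement is the anisotropic Sobolev (Troisi-type) embedding, and I would prove the scale-correct geometric-mean form of the estimate, namely
$$\|v\|_{L^{\bar p^{*}}(\Omega)}\le C\prod_{i=1}^{N}\left\|\frac{\partial v}{\partial x_i}\right\|_{L^{p_i}(\Omega)}^{1/N},$$
from which the bound claimed for every $r\in[1,\bar p^{*}]$ is read off on the bounded domain. By a density argument (smooth compactly supported functions being dense in $W_0^{1,p_i}(\Omega)$) it suffices to prove the inequality for $v\in C_c^{\infty}(\Omega)$, extended by zero to $\mathbb{R}^N$; the continuity of the inclusion for a general $v$ is then recovered by approximating in the $W_0^{1,p_i}$-norm and passing the estimate to the limit, the right-hand side converging from convergence of each $\partial_i v_n$ in $L^{p_i}$ and the left-hand side being controlled by Fatou's lemma.

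First I would establish the base Gagliardo--Nirenberg inequality for $L^1$ gradients: for $w\in C_c^{\infty}(\mathbb{R}^N)$,
$$\|w\|_{L^{N/(N-1)}(\mathbb{R}^N)}\le\prod_{i=1}^{N}\left\|\partial_i w\right\|_{L^{1}(\mathbb{R}^N)}^{1/N}.$$
This follows from the one-dimensional bound $|w(x)|\le\int_{\mathbb{R}}|\partial_i w|\,dt$ in each direction, raised to the power $1/(N-1)$ and multiplied over $i$, after which one integrates one variable at a time and applies the generalized H\"older inequality with $N-1$ exponents at each stage (Gagliardo's lemma). I would then apply this to $w=|v|^{\gamma}$ for a parameter $\gamma>1$, use $\partial_i w=\gamma|v|^{\gamma-1}\mathrm{sgn}(v)\,\partial_i v$ together with H\"older in each factor, and arrive at
$$\|v\|_{L^{\gamma N/(N-1)}}^{\gamma}\le\gamma\prod_{i=1}^{N}\|v\|_{L^{(\gamma-1)p_i'}}^{(\gamma-1)/N}\,\prod_{i=1}^{N}\|\partial_i v\|_{L^{p_i}}^{1/N},\qquad p_i'=\frac{p_i}{p_i-1}.$$

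The decisive and genuinely delicate step is to close this last inequality. Under anisotropy the self-improving factor $\prod_i\|v\|_{L^{(\gamma-1)p_i'}}^{(\gamma-1)/N}$ sits at the \emph{different} exponents $(\gamma-1)p_i'$ and, unlike the isotropic case, cannot be reduced to a single power of $\|v\|_{L^{\gamma N/(N-1)}}$ by a one-line H\"older/absorption argument. Computing the averaged exponent $\tfrac1N\sum_i 1/p_i'$ is exactly where the harmonic mean $\bar p$ enters and pins the target exponent to $\bar p^{*}$; I expect making the corresponding absorption rigorous (whether by the careful bookkeeping of Troisi or by iterating the displayed inequality) to be the main obstacle, and it is precisely the content of the references \cite{castrothesis, Castro, Anisoemb}. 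The range for which this bootstrap is admissible, in particular that the parameter $\gamma>1$ can be chosen with the intermediate exponents in range, is exactly the condition $\bar p<N$.

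Finally, once the endpoint inequality is in hand the two regimes follow routinely. When $\bar p<N$ and $1\le r\le\bar p^{*}$, the bounded-domain inclusion gives $\|v\|_{L^{r}(\Omega)}\le|\Omega|^{\frac1r-\frac1{\bar p^{*}}}\|v\|_{L^{\bar p^{*}}(\Omega)}$, whence the continuity of $W_0^{1,p_i}(\Omega)\to L^{r}(\Omega)$. When $\bar p\ge N$, I would fix an arbitrary $r\ge1$, choose exponents $\tilde p_i\le p_i$ whose harmonic mean $\tilde p$ satisfies $\tilde p<N$ and whose critical exponent obeys $\tilde p^{*}\ge r$ (possible since $\tilde p^{*}\to\infty$ as $\tilde p\uparrow N$), and conclude by applying the already-proved subcritical case in these smaller exponents, using the bounded-domain inclusions $L^{p_i}(\Omega)\hookrightarrow L^{\tilde p_i}(\Omega)$ for the partial derivatives to bound the right-hand side back by $\prod_i\|\partial_i v\|_{L^{p_i}(\Omega)}^{1/N}$.
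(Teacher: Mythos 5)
The paper does not prove this theorem at all: it is quoted as a known result from \cite{castrothesis, Castro, Anisoemb}, so the only meaningful benchmark is the classical Troisi argument contained in those references, and your outline is precisely that argument. The chain you describe --- the $L^1$ Gagliardo--Nirenberg inequality $\|w\|_{L^{N/(N-1)}}\le\prod_i\|\partial_i w\|_{L^1}^{1/N}$ proved by integrating one variable at a time with generalized H\"older, the substitution $w=|v|^{\gamma}$ with H\"older at the exponents $(p_i,p_i')$, and the absorption of the intermediate norms $\|v\|_{L^{(\gamma-1)p_i'}}$ with the harmonic mean $\bar p$ emerging from $\frac1N\sum_i 1/p_i'$ --- is the standard proof. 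Your reductions at both ends are also sound: density of $C_c^{\infty}$ plus Fatou handles general $v\in W_0^{1,p_i}(\Omega)$; H\"older on the bounded domain handles $1\le r<\bar p^{*}$; and your treatment of $\bar p\ge N$ by lowering exponents does work, since decreasing a single $\tilde p_i$ continuously from $p_i$ down to $1$ sweeps the harmonic mean through every value just below $N$ (at $\tilde p_i=1$ one has $\sum_j 1/\tilde p_j\ge 1$, hence $\tilde p\le N$), so $\tilde p^{*}$ can be made to exceed any prescribed $r$. A point in your favor: you prove the scale-correct form with the geometric mean $\prod_i\|\partial_i v\|_{L^{p_i}}^{1/N}$, whereas the inequality as printed in the paper, with the plain product $\prod_i\|\partial_i v\|_{L^{p_i}}$, is not invariant under $v\mapsto\lambda v$ (the left side scales like $\lambda$, the right like $\lambda^{N}$) and can only be correct in the $1/N$-power version; your version is what the cited references actually establish.

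The one genuine incompleteness is the step you yourself flag: the absorption of $\prod_i\|v\|_{L^{(\gamma-1)p_i'}}^{(\gamma-1)/N}$ into the left-hand side is asserted by appeal to Troisi rather than carried out, and this bookkeeping --- choosing $\gamma$ so that $\gamma N/(N-1)=\bar p^{*}$, interpolating each norm $\|v\|_{L^{(\gamma-1)p_i'}}$ against $\|v\|_{L^{\bar p^{*}}}$, and verifying that the exponents on the two sides balance exactly when $\bar p<N$ (this is also where a restriction of the type $p_N\le\bar p^{*}$, which the paper indeed assumes elsewhere, enters) --- is the entire analytic content of the theorem. For a self-contained proof a referee would require it; measured against this paper, which itself only cites the result, your proposal identifies the correct route and correctly locates its sole difficulty.
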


\textbf{Weak Solution:} We say that $u\in W^{1,p_i}_{loc}(\Omega)$ is a weak solution of the problem \eqref{weaksoln} if $u>0$ a.e. in $\Omega$ and for every $\phi\in C_c^{1}(\Omega)$ 
\begin{equation}\label{wkform}
\sum_{i=1}^{N}\int_{\Omega}\Big|\frac{\partial u}{\partial x_i}\Big|^{p_i-2}\frac{\partial u}{\partial x_i}\frac{\partial\varphi}{\partial x_i} \,dx=\int_{\Omega}g(x)e^\frac{1}{u}\phi\,dx.
\end{equation}

\textbf{Stable Solution:} We say that $u\in W_{loc}^{1,p_i}(\mathbb{R}^N)$ is a stable solution of the problem $(\ref{main equation})$, if $u>0$ a.e. in $\Omega$ such that both $g(x)f(u), g(x)f'(u)\in L^1_{loc}(\mathbb{R}^N)$ and for all $\varphi\in C_{c}^{1}(\mathbb{R}^N)$,
\begin{equation}\label{weak solution}
\sum_{i=1}^{N}\int_{\mathbb{R}^N}\Big|\frac{\partial u}{\partial x_i}\Big|^{p_i-2}\frac{\partial u}{\partial x_i}\frac{\partial\varphi}{\partial x_i} \,dx=\int_{\mathbb{R}^N}g(x)f(u)\varphi \,dx
\end{equation}
and
\begin{equation}\label{stable solution}
\int_{\mathbb{R}^N}g(x)f'(u)\varphi^{2} \,dx\leq\sum_{i=1}^{N}(p_i-1)\int_{\mathbb{R}^N}\Big|\frac{\partial u}{\partial x_i}\Big|^{p_i-2}\Big|\frac{\partial\varphi}{\partial x_i}\Big|^2 \,dx.
\end{equation}

For a general theory of anisotropic Sobolev space, we refer the reader to \cite{castrothesis, Castro, Elsaid, aniso3}.

\subsection*{Assumption and notation for the nonexistence results:}
We denote by $\Omega=\mathbb{R}^N$ for $N\geq 1$ and assume $2<p_1\leq p_2\leq\cdots\leq p_N.$

We will make use of the following truncated functions later. For $k\in\mathbb{N}$, $\alpha>p_{N}-1$ and $t\geq 0$, define
\begin{equation}\nonumber
    a_{k}(t)=
    \begin{cases}
      \frac{(1-\alpha)}{2}{k^\frac{\alpha+1}{2}}\left(t+\frac{1+\alpha}{k(1-\alpha)}\right), & \text{if}\ 0\leq t<\frac{1}{k}, \\\\
      
      t^\frac{1-\alpha}{2}, & \text{if}\ {t\geq \frac{1}{k}},
    \end{cases}
\end{equation}
and
\begin{equation}\nonumber
b_{k}(t)=
\begin{cases}
{-\alpha}{k^{\alpha+1}}\left(t-\frac{1+\alpha}{k\alpha}\right), & \text{if}\ 0\leq t<\frac{1}{k},\\\\
t^{-\alpha}, & \text{if}\ t\geq \frac{1}{k}.
\end{cases}
\end{equation}

Then it can be easily verified that both $a_k$ and $b_k$ are positive $C^1[0,\infty)$ decreasing functions. Moreover, $a_k$ and $b_k$ satisfies the following properties:
\begin{itemize}
\item[a.] $$a_k(t)^2\geq t\,b_k(t),\,\,\forall\,\,t\geq 0.$$ 
\item[b.] $$a_k(t)^{p_i}|a_{k}'(t)|^{2-p_i}+b_k(t)^{p_i}|b_{k}'(t)|^{1-p_i}\leq C\,|t|^{p_i-\alpha-1},$$ for some positive constant $C(p_1,p_2,\cdots,p_N,\alpha)$.
\item[c.] 
$$
a_k'(t)^2=\frac{(\alpha-1)^2}{4\alpha}|b_k'(t)|,\,\,\forall\,\,t\geq 0.
$$
\end{itemize}
The following notations will be used for the nonexistence results.

\textbf{Notation:}\quad
The equation (\ref{main equation}) will be denoted by $(\ref{main equation})_s$ and $(\ref{main equation})_e$ for $f(u)=-u^{-\delta}-u^{-\gamma}$ and $f(u)=-e^\frac{1}{u}$ respectively. Without loss of generality we assume $0<\delta\leq\gamma.$

We denote by $B_{r}(0)$ to be the ball centered at $0$ with radius $r>0$.

We denote by $u_i=\frac{\partial u}{\partial x_i}$ for all $i=1,2,\cdots,N$ and $q=\frac{\sum_{i=1}^{N}p_i}{N}$. 

Denote by $$
l_1=\frac{p_N-q}{2},\,\,l_2=\frac{2\delta}{N(q-1)}-\frac{q-1}{2}\text{ and }l_3=\frac{2}{MN(q-1)}-\frac{q-1}{2}.$$

We denote by $$A=\left(\frac{N(q-1)(p_N-1)}{4},\infty\right),$$
$$
B=\left(0,\frac{4}{N(q-1)(p_N-1)}\right),\,\,C=\left(0,\frac{4}{N(N-1)(q-1)}\right).
$$
Define
$$I=\cap_{i=1}^{N}I_i$$ where
$$
I_i=\left(\frac{N^2(q-1)(p_i-1)}{p_i\big({N(q-1)+4}\big)-N^2(q-1)},\infty\right),
$$
provided $p_i\big({N(q-1)+4}\big)-N^2(q-1)>0$ for all $i=1,2,\cdots,N$ and $J=B\cap C.$

We assume $\delta\in A$ and $M\in J.$ Observe that $\delta\in A$ implies $l_2>l_1$ and $l_2>0.$ Also $M\in J$ implies $l_3>l_1$ and $l_3>0.$

If $C$ depends on $\epsilon$ we denote by $C_\epsilon$ and if $C$ depends on $r_1,r_2,\cdots,r_m$ we denote it by $C(r_1,r_2,\cdots,r_m).$

Throughout this article  $\psi_{R}\in C_{c}^1(\mathbb{R}^N)$ is a  test function such that
\begin{gather*}
0\leq\psi_{R}\leq 1\text{ in }\mathbb{R}^N,\quad
\psi_{R}=1\text{ in } B_{R}(0), \\
\psi_{R}=0\text{ in }\mathbb{R}^N\setminus B_{2R}(0)
\end{gather*}
with
$$
|\nabla\psi_R|\leq\frac{C}{R}
$$
for some constant $C>0$ (independent of $R$).
\section{Main results}
The main results of this article reads as follows:
\begin{theorem}\label{Mainexi}
Let $\Omega\subset\mathbb{R}^N$ be a bounded smooth domain, $N\geq 3$ and $p_N\geq\cdots p_2\geq p_1\geq 2$. Then the problem \eqref{weaksoln} admits a weak solution $u$ in $W^{1,p_i}_{loc}(\Omega)\cap L^\infty(\Omega)$ such that $(u-\epsilon)^{+}\in W_{0}^{1,p_i}(\Omega)$ for every $\epsilon>0,$ provided
\begin{enumerate}
\item[(a)] $g\in L^m(\Omega)$ for some $m>\frac{\bar{p}^{*}}{\bar{p}^{*}-\bar{p}}$ if $\bar{p}<N$ where $\bar{p}^{*}\geq p_N.$ 
\item[(b)] $g\in L^m(\Omega)$ for some $m>\frac{r}{r-p_N}$ if $\bar{p}\geq N$ where $r>p_N.$
\end{enumerate}
\end{theorem}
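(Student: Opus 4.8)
The plan is to realize the solution as the increasing limit of solutions of regularized problems, following the approximation scheme of \cite{Perera}, and to replace the global energy bound---which fails here because of the strong singularity---by uniform estimates on the truncations $(u_n-\epsilon)^{+}$. For $n\in\mathbb{N}$, set $g_n=\min\{g,n\}$ and $f_n(s)=\exp\bigl(1/(s^{+}+1/n)\bigr)$, so that $f_n$ is continuous and bounded by $e^{n}$, and let $u_n\in W_{0}^{1,p_i}(\Omega)$ solve
\[
\sum_{i=1}^{N}\int_{\Omega}\Big|\frac{\partial u_n}{\partial x_i}\Big|^{p_i-2}\frac{\partial u_n}{\partial x_i}\frac{\partial\varphi}{\partial x_i}\,dx=\int_{\Omega}g_n\,f_n(u_n)\varphi\,dx,\qquad\varphi\in W_{0}^{1,p_i}(\Omega).
\]
Existence of each $u_n$ follows from the coercivity and monotonicity of the anisotropic operator on $W_{0}^{1,p_i}(\Omega)$ together with the boundedness of the right-hand side (Leray--Lions theory combined with a Schauder fixed point argument for the $u_n$-dependence), and the weak/strong maximum principles give $u_n>0$ in $\Omega$. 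Since $s\mapsto f_n(s)$ is decreasing while $g_n\le g_{n+1}$ and $f_n\le f_{n+1}$, testing the difference of two consecutive equations with $(u_n-u_{n+1})^{+}$ and using strict monotonicity of the operator yields $u_n\le u_{n+1}$. Hence on every $\omega\Subset\Omega$ one has $u_n\ge u_1\ge c_\omega>0$ uniformly in $n$, a local lower bound that tames the singularity away from $\partial\Omega$.

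The core of the proof is a uniform truncation estimate. Fix $\epsilon>0$ and test with $(u_n-\epsilon)^{+}\in W_{0}^{1,p_i}(\Omega)$; since $f_n(u_n)\le e^{1/\epsilon}$ on $\{u_n>\epsilon\}$, this gives
\[
\sum_{i=1}^{N}\int_{\Omega}\Big|\frac{\partial(u_n-\epsilon)^{+}}{\partial x_i}\Big|^{p_i}\,dx\le e^{1/\epsilon}\int_{\Omega}g\,(u_n-\epsilon)^{+}\,dx.
\]
I estimate the right-hand side by H\"older's inequality using $g\in L^{m}(\Omega)$---with $m=(\bar p^{*})'$ when $\bar p<N$, and $m>1$ (so that $L^{m'}$ is an admissible target) when $\bar p\ge N$---followed by the Sobolev embedding of Theorem~\ref{Sobolevemb}, which bounds $\|(u_n-\epsilon)^{+}\|_{L^{\bar p^{*}}}$ (resp. $L^{m'}$) by the $W_{0}^{1,p_i}$-norm. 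Writing $G_i=\int_{\Omega}|\partial(u_n-\epsilon)^{+}/\partial x_i|^{p_i}\,dx$, one is led to an inequality of the form $\sum_i G_i\le C_\epsilon\sum_i G_i^{1/p_i}$; because $p_i\ge 2$, Young's inequality absorbs each $G_i^{1/p_i}$ into $\tfrac{1}{2N}G_i$ plus a constant, so $\sum_i G_i\le C_\epsilon$ uniformly in $n$. I expect this absorption step to be the decisive point: testing with $u_n$ itself gives no uniform bound, since $e^{1/u_n}u_n$ blows up near $\partial\Omega$, so only the truncated energy is controlled---exactly the obstruction flagged in the introduction, and the reason the conclusion is phrased via $(u-\epsilon)^{+}$ rather than $u\in W_{0}^{1,p_i}$.

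With these bounds I pass to the limit. The monotone limit $u=\lim_n u_n$ satisfies $(u-\epsilon)^{+}\in W_{0}^{1,p_i}(\Omega)$ for every $\epsilon>0$ by weak lower semicontinuity, and together with $u\ge c_\omega$ on compact sets this yields $u\in W_{loc}^{1,p_i}(\Omega)$ with $u>0$. On the support of any $\phi\in C_c^{1}(\Omega)$ one has $u_n\ge c_\omega>0$, hence $g_n f_n(u_n)\le g\,e^{1/c_\omega}\in L_{loc}^{1}(\Omega)$; the almost everywhere convergence of the gradients $\partial u_n/\partial x_i\to\partial u/\partial x_i$ (a local Boccardo--Murat/Leray--Lions argument) together with dominated convergence then let me pass to the limit in \eqref{wkform}, so $u$ is a weak solution with the required properties.

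For the boundedness statements I run a Stampacchia (De Giorgi) iteration on the $u_n$ under the stronger integrability of $g$. Testing with $(u_n-k)^{+}$ for $k\ge 1$ and using $f_n(u_n)\le e$ on $\{u_n>k\}$, I bound the anisotropic energy from below by a power of $\|(u_n-k)^{+}\|_{L^{\bar p^{*}}(\Omega)}$ (resp. $L^{r}(\Omega)$ with $r>p_N$ when $\bar p\ge N$) via Theorem~\ref{Sobolevemb}---here the mixed exponents make this lower bound delicate, which is the anisotropic counterpart of the usual difficulty and is where the hypotheses $\bar p^{*}\ge p_N$ and $r>p_N$ enter---and I estimate the right-hand side by H\"older on the super-level set $A_k=\{u_n>k\}$. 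This produces a recursion $|A_h|\le C(h-k)^{-\sigma}|A_k|^{\mu}$ whose exponent satisfies $\mu>1$ precisely under the stated thresholds $m>\bar p^{*}/(\bar p^{*}-\bar p)=N/\bar p$ in case (a) and $m>r/(r-p_N)$ in case (b); the Stampacchia iteration lemma then gives $\|u_n\|_{L^{\infty}(\Omega)}\le C$ uniformly in $n$, whence $u\in L^{\infty}(\Omega)$.
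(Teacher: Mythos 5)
Your proposal is correct and takes essentially the same approach as the paper: the same approximating problems $g_n=\min\{g,n\}$ with nonlinearity $e^{1/(u_n+1/n)}$, the same monotonicity and local lower bound $u_n\geq c_\omega>0$, the same uniform truncated-energy estimate (you test with $(u_n-\epsilon)^{+}$ and absorb via Young, while the paper tests with $(u_n-\gamma_1)^{+}$ over an exhaustion of $\Omega$ --- the same device), and the same Stampacchia iteration for the $L^\infty$ bounds under the stated thresholds on $m$. The only minor variation is in the limit passage, where you invoke a Boccardo--Murat type a.e. gradient convergence result, whereas the paper proves strong $W^{1,p_i}_{loc}$ convergence of a diagonal subsequence directly from the monotonicity of the anisotropic operator; both are standard and the substance is unchanged.
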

\begin{theorem}\label{smain1}
Let $u\in W_{loc}^{1,p_i}(\Omega)$ be such that $0<u\leq 1$ a.e. in $\Omega.$ Assume that $1\leq\delta<\gamma$ be such that $\delta\in A\cap I.$ Then $u$ is not a stable solution to the problem $(\ref{main equation})_s.$
\end{theorem}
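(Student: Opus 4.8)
The plan is to argue by contradiction, following Farina's stability method but adapted to the non-homogeneous operator through the truncations $a_k,b_k$. Suppose $u$ is a stable solution of $(\ref{main equation})_s$ with $0<u\le 1$. The only information available is the weak formulation \eqref{weak solution} and the stability inequality \eqref{stable solution}, so the whole argument consists in feeding carefully matched test functions into each and combining them into a single Cacciopoli-type estimate for the singular integral $\int(u^{-\delta}+u^{-\gamma})b_k(u)\psi_R^{2p_N}\,dx$ (the Cacciopoli estimates of Section 5).

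First I would insert $\varphi=a_k(u)\psi_R^{p_N}$ into \eqref{stable solution} and $\phi=b_k(u)\psi_R^{2p_N}$ into \eqref{weak solution}, so that the powers of the cutoff match ($\varphi^2=a_k(u)^2\psi_R^{2p_N}$). Since $f(u)=-u^{-\delta}-u^{-\gamma}$ gives $f'(u)=\delta u^{-\delta-1}+\gamma u^{-\gamma-1}$, property a.\ together with $1\le\delta<\gamma$ yields the pointwise bound $f'(u)a_k(u)^2\ge(\delta u^{-\delta}+\gamma u^{-\gamma})b_k(u)\ge(u^{-\delta}+u^{-\gamma})b_k(u)=-f(u)b_k(u)$, so the left-hand side of \eqref{stable solution} controls the singular integral from below. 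On the right-hand side of \eqref{stable solution} the leading gradient term is $\sum_i(p_i-1)\int|u_i|^{p_i}a_k'(u)^2\psi_R^{2p_N}\,dx$, and property c.\ rewrites $a_k'(u)^2$ as the constant $\frac{(\alpha-1)^2}{4\alpha}$ times $|b_k'(u)|$, turning it into $\frac{(\alpha-1)^2}{4\alpha}\sum_i(p_i-1)\int|u_i|^{p_i}|b_k'(u)|\psi_R^{2p_N}\,dx$. The weak formulation, after moving the favourable term $-\int f(u)b_k(u)\psi_R^{2p_N}$ to the other side, expresses $\sum_i\int|u_i|^{p_i}|b_k'(u)|\psi_R^{2p_N}\,dx$ as the same singular integral plus a gradient-of-cutoff remainder. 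Bounding $(p_i-1)\le p_N-1$ and substituting this identity back isolates the singular integral on the left, at the cost of the absorption requirement that $\frac{(p_N-1)(\alpha-1)^2}{4\alpha}<1$ while $\alpha>p_N-1$.

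The cross terms produced by differentiating $\psi_R$ in both identities are all of the shape $\int|u_i|^{p_i-1}a_k a_k'\,\psi_R^{2p_N-1}|\partial_i\psi_R|$, $\int|u_i|^{p_i-2}a_k^2\psi_R^{2p_N-2}|\partial_i\psi_R|^2$ and $\int|u_i|^{p_i-1}b_k\psi_R^{2p_N-1}|\partial_i\psi_R|$. I would treat each by Young's inequality, splitting off a factor $|u_i|^{p_i}|b_k'(u)|$ to be absorbed into the leading term and leaving a factor that, thanks to the two summands of property b.\ and the identity c., is bounded by $C|u|^{p_i-\alpha-1}$. Using $|\partial_i\psi_R|\le C/R$, these remainders live on the annulus $B_{2R}\setminus B_R$, and a Hölder inequality trades $\int_{B_{2R}\setminus B_R}|u|^{p_i-\alpha-1}$ against a power $\theta_i$ of the singular integral times a power of $R^N$; the hypothesis $0<u\le1$ is exactly what keeps the complementary factor $u^{\ge 0}$ bounded here.

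Collecting everything produces an estimate of the form $\int_{B_R}(u^{-\delta}+u^{-\gamma})b_k(u)\,dx\le C\sum_i R^{N(1-\theta_i)-p_i}\big(\int_{B_{2R}}(u^{-\delta}+u^{-\gamma})b_k(u)\,dx\big)^{\theta_i}$. The two conditions on $\delta$ enter precisely here: $\delta\in A$ (which the excerpt records as $l_2>l_1$ and $l_2>0$) makes the absorption admissible and fixes a nonempty window for $\alpha>p_N-1$, while $\delta\in I$ balances the $N$ Hölder exponents $\theta_i$ so that every power of $R$ is non-positive. Letting $R\to\infty$ then forces the singular integral to be finite and the annular contributions to vanish, whence $\sum_i\int_{\mathbb{R}^N}|u_i|^{p_i}|b_k'(u)|\,dx=0$; sending $k\to\infty$ gives $u_i=0$ for all $i$, so $u$ is constant, contradicting \eqref{weak solution} since $-(u^{-\delta}+u^{-\gamma})\ne0$. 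The \textbf{main obstacle} is the non-homogeneity: one must choose a single family $a_k,b_k$ whose derivatives simultaneously reconcile the $N$ distinct gradient terms $|u_i|^{p_i}$ appearing in \eqref{stable solution} and \eqref{weak solution} (this is the role of the tailored identities a.--c.), and then check that the single absorption constant and all $N$ Hölder balances can be met at once by one admissible $\alpha$ — which is exactly what $\delta\in A\cap I$ guarantees.
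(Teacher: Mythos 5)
Your overall architecture is the same as the paper's (test $a_k(u)\times$cutoff-power in \eqref{stable solution}, $b_k(u)\times$cutoff-power in \eqref{weak solution}, combine via properties a.--c., then scale cutoffs), but there is a genuine quantitative flaw in your absorption step. You discard the coefficients $\delta,\gamma$ at the very start, weakening $f'(u)a_k(u)^2\geq(\delta u^{-\delta}+\gamma u^{-\gamma})b_k(u)$ down to $(u^{-\delta}+u^{-\gamma})b_k(u)$, and then your own stated absorption requirement is $\frac{(p_N-1)(\alpha-1)^2}{4\alpha}<1$ together with $\alpha>p_N-1$. Since $\alpha\mapsto\frac{(\alpha-1)^2}{\alpha}$ is increasing for $\alpha>1$, this forces $\frac{(p_N-2)^2}{4}<1$, i.e. $p_N<4$: for $p_N\geq 4$ your admissible window for $\alpha$ is empty, and in any case your condition no longer involves $\delta$, so your later claim that ``$\delta\in A$ makes the absorption admissible'' is not connected to anything you actually wrote. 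The paper keeps the factor $\delta$ (using only $\gamma\geq\delta$ to get $uf'(u)b_k(u)\geq\delta(u^{-\delta}+u^{-\gamma})b_k(u)$), so its absorption constant is $\alpha_\epsilon=\delta-\frac{(\alpha-1)^2(N(q-1)+\epsilon)}{4\alpha(1-\epsilon)}$, and the hypothesis $\delta\in A$ is calibrated exactly so that $\alpha_\epsilon>0$ for $\alpha=2\beta+q-1$ with $\beta\in(l_1,l_2)$, i.e. $\alpha\in\left(p_N-1,\frac{4\delta}{N(q-1)}\right)$; the largeness of $\delta$ is what pays for the absorption, while $\delta\geq1$ is used only later, to bound $\psi^{(\delta-1)\theta_i'}\leq1$ in the Young step.

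A secondary gap is your endgame. From a bootstrap inequality of the form $\int_{B_R}S\,dx\leq C\sum_iR^{a_i}\bigl(\int_{B_{2R}}S\,dx\bigr)^{\theta_i}$ with $\theta_i<1$, ``letting $R\to\infty$ forces the singular integral to be finite'' does not follow: the right-hand side lives on the larger ball with a sublinear power, you cannot absorb an $\epsilon\int_{B_{2R}}S$ into $\int_{B_R}S$, and iterating $R\mapsto2R$ does not close without an a priori growth bound on $\int_{B_R}S$, which is unavailable since $1/u$ has no upper bound here. The paper avoids the annulus bootstrap entirely: after the monotone convergence $b_k(u)\nearrow u^{-\alpha}$ it substitutes $\psi\mapsto\psi^{(2\beta+\delta+q-1)/q}$ and applies Young's inequality with the \emph{same} cutoff on both sides, using $g\geq c>0$ to absorb $\epsilon\int g(\psi/u)^{2\beta+\delta+q-1}$ and leaving the closed Cacciopoli estimate $\int g(\psi/u)^{2\beta+\delta+q-1}\,dx\leq C\sum_i\int|\psi_i|^{p_i\theta_i'}\,dx$; then $\psi=\psi_R$ gives $\int_{B_R}g\,u^{-(2\beta+\delta+q-1)}\,dx\leq C\sum_iR^{N-p_i\theta_i'}\to0$ for $\beta$ near $l_2$ (this is where $\delta\in I$ enters), a direct contradiction since the integrand is strictly positive --- no passage $k\to\infty$ in the gradient terms and no ``$u$ is constant'' argument is needed. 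Your Hölder balancing exponents $\theta_i$ and the role you assign to $\delta\in I$ do match the paper's, so repairing the proof amounts to retaining $\delta$ in the absorption and replacing the annulus Hölder step by the matched-support Young absorption.
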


\begin{theorem}\label{smain2}
Let $u\in W_{loc}^{1,p_i}(\Omega)$ be such that $u\geq 1$ a.e. in $\Omega.$ Assume that $0<\delta<\gamma$ be such that $\delta\in A$ and $\gamma\in I\cap[1,\infty).$  Then $u$ is not a stable solution to the problem $(\ref{main equation})_s.$
\end{theorem}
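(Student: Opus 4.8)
The plan is to adapt Farina's stability argument to the anisotropic operator, using the truncations $a_k,b_k$ and the cut-offs $\psi_R$ to build a Caccioppoli-type feedback inequality for the source integral $S(R)=\int_{\mathbb{R}^N} g\,(u^{-\delta}+u^{-\gamma})\,u^{-\alpha}\,\psi_R^{2\tau}\,dx$ (with $\tau$ large and $\alpha>p_N-1$ to be fixed later), and then to send $R\to\infty$. A preliminary simplification comes from the hypothesis $u\ge 1$: for every $k$ one has $u\ge 1/k$, so $a_k(u)=u^{(1-\alpha)/2}$ and $b_k(u)=u^{-\alpha}$ with the truncation inactive, and moreover $u^{-\gamma}\le u^{-\delta}$ pointwise, which is what singles out the $\gamma$-term as the one to interpolate against later.

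First I would test the stability inequality \eqref{stable solution} with $\varphi=a_k(u)\psi_R^{\tau}$. Since $f'(u)=\delta u^{-\delta-1}+\gamma u^{-\gamma-1}>0$, property (a) yields $f'(u)a_k(u)^2\ge f'(u)\,u\,b_k(u)=(\delta u^{-\delta}+\gamma u^{-\gamma})u^{-\alpha}$, giving a lower bound for the left-hand side carrying the favourable weights $\delta,\gamma$. Expanding $\partial_i\varphi$ by the product rule, the principal term $\sum_i(p_i-1)\int|u_i|^{p_i}a_k'(u)^2\psi_R^{2\tau}$ is rewritten, through property (c), as a multiple of $\sum_i(p_i-1)\int|u_i|^{p_i}|b_k'(u)|\psi_R^{2\tau}$. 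I then test the weak formulation of $(\ref{main equation})_s$ with $\phi=b_k(u)\psi_R^{2\tau}$; since $b_k$ is decreasing and $-f(u)=u^{-\delta}+u^{-\gamma}$, this produces the identity
$$\sum_{i=1}^N\int|u_i|^{p_i}|b_k'(u)|\psi_R^{2\tau}\,dx=S(R)+2\tau\sum_{i=1}^N\int|u_i|^{p_i-2}u_i\,b_k(u)\,\psi_R^{2\tau-1}\,\partial_i\psi_R\,dx,$$
which couples the anisotropic gradient energy directly to $S(R)$ up to a cut-off remainder.

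Substituting this identity back into the stability inequality, the two cut-off contributions (the cross term on the stability side and the last term above) are handled by Young's inequality in the form adapted to the anisotropy: using the exact factorisation $|u_i|^{p_i-2}a_k(u)^2=(|u_i|^{p_i}a_k'(u)^2)^{(p_i-2)/p_i}(a_k(u)^{p_i}|a_k'(u)|^{2-p_i})^{2/p_i}$, Young's inequality separates an absorbable multiple of $|u_i|^{p_i}a_k'(u)^2\psi_R^{2\tau}$ from a remainder that property (b) and $|\nabla\psi_R|\le C/R$ bound by $R^{-p_i}\int_{B_{2R}\setminus B_R}u^{p_i-\alpha-1}\,dx$. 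After absorbing the small gradient pieces one is left with an inequality of the shape $(\delta-c(\alpha))\int g\,u^{-\delta-\alpha}\psi_R^{2\tau}+(\gamma-c(\alpha))\int g\,u^{-\gamma-\alpha}\psi_R^{2\tau}\le C\sum_i R^{-p_i}\int_{B_{2R}\setminus B_R}u^{p_i-\alpha-1}$, where $c(\alpha)$ collects the constant $\tfrac{(\alpha-1)^2}{4\alpha}$ from (c) together with the anisotropic Hölder constants. Optimising in $\alpha$, the requirement that both coefficients remain positive reduces exactly to $\delta>\tfrac{N(q-1)(p_N-1)}{4}$, i.e. $\delta\in A$ (equivalently $l_2>l_1$). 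To close the loop I would bound each annular integral by Hölder, interpolating $u^{p_i-\alpha-1}$ against the $\gamma$-source $u^{-\gamma-\alpha}$ (this is where $u\ge1$ is used) and the Lebesgue measure $|B_{2R}|\le CR^N$; the resulting powers of $R$ are negative precisely when $\gamma\in I$, the defining bounds of the $I_i$ being exactly the admissibility of the relevant Hölder conjugate exponents for a value of $\alpha$ still compatible with $\delta\in A$.

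A final application of Young's inequality converts the right-hand side into $\epsilon\,S(2R)+C_\epsilon R^{-\theta}$ with $\theta>0$, and iterating this feedback inequality over dyadic radii forces $S(R)\to 0$ as $R\to\infty$. Since $g\ge c>0$ and $u^{-\delta-\alpha}>0$ a.e., this yields $\int_{\mathbb{R}^N}g\,u^{-\delta-\alpha}\,dx=0$, a contradiction; hence $(\ref{main equation})_s$ admits no stable solution with $u\ge1$. The main obstacle is not any single estimate but the simultaneous calibration of the parameters: property (b) forces one to pay a factor $R^{-p_i}\int u^{p_i-\alpha-1}$ in each direction, and the coefficient condition pushes $\alpha$ downward while the decay-in-$R$ condition pushes it upward, so the crux is to verify that $\delta\in A$ and $\gamma\in I$ leave a nonempty admissible window for $\alpha$ (and a large enough $\tau$) uniformly across the anisotropic exponents $p_1\le\cdots\le p_N$ — which is exactly the content of the Caccioppoli estimates assembled in Section~5.
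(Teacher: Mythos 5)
Up to your final step, your argument tracks the paper's own proof almost line for line: testing \eqref{stable solution} with $a_k(u)\psi^{\tau}$ and the weak formulation with $b_k(u)\psi^{2\tau}$, invoking properties (a)--(c) of the truncations, absorbing the cross terms by Young's inequality via the factorisation you wrote, noting that with $\delta<\gamma$ only the $\delta$-coefficient binds (which is exactly why the paper lower-bounds $uf'(u)\geq \delta(u^{-\delta}+u^{-\gamma})$ and needs only $\delta\in A$), and interpolating against the $\gamma$-source with the conjugate exponents $\zeta_i=\frac{2\beta+\gamma+q-1}{2\beta+q-p_i}$, $\zeta_i'=\frac{2\beta+\gamma+q-1}{\gamma+p_i-1}$, whose admissibility as $\beta\to l_2$ is precisely $\gamma\in I$. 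This is the content of Lemma \ref{a-priori} and Corollary \ref{cor1}(2).

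The genuine gap is in your endgame. By estimating the remainder crudely as $\psi_R^{2\tau-p_i}\leq 1$ before applying Young's inequality on the annulus, your absorbable term becomes $\epsilon\int_{B_{2R}\setminus B_R}g\,u^{-\gamma-\alpha}\,dx$, which lives where $\psi_R<1$ and hence can only be controlled by $\epsilon S(2R)$, not by $\epsilon S(R)$; this is what forces your cross-scale feedback $S(R)\leq \epsilon S(2R)+C_\epsilon R^{-\theta}$ and the dyadic iteration. But that iteration does not close: iterating gives
\begin{equation*}
S(R)\leq \epsilon^{j}S(2^{j}R)+C_\epsilon\sum_{i<j}\epsilon^{i}(2^{i}R)^{-\theta},
\end{equation*}
and since $g$ is only assumed to lie in $L^1_{loc}(\mathbb{R}^N)$ with $g\geq c$ (no upper growth bound), while $u\geq 1$ only yields $S(2^{j}R)\leq 2\|g\|_{L^1(B_{2^{j+1}R})}$, the quantity $\epsilon^{j}S(2^{j}R)$ need not tend to zero for any fixed $\epsilon$ — take $g(x)=c\exp(\exp(|x|))$ as an obstruction. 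So "$S(R)\to 0$" does not follow, and the contradiction is not reached. The fix is exactly the paper's same-scale absorption: do not discard the cutoff power, but calibrate it first (the paper replaces $\psi$ by $\psi^{\frac{2\beta+\gamma+q-1}{q}}$, which is your "large $\tau$" made precise) so that the remainder reads $(\frac{\psi}{u})^{2\beta+q-p_i}\psi^{\gamma-1}|\psi_i|^{p_i}$ and Young's inequality produces the $\epsilon$-term $\epsilon\,g(\frac{\psi}{u})^{2\beta+\gamma+q-1}$ with the \emph{same} weight as the left-hand side. That absorbs at scale $R$ itself (here $\gamma\geq 1$, $0\leq\psi\leq1$ and $g\geq c$ enter), leaving the clean Caccioppoli bound $\int g(\frac{\psi}{u})^{2\beta+\gamma+q-1}\leq C\sum_i\int|\psi_i|^{p_i\zeta_i'}$, whence a single choice $\psi=\psi_R$ and $R\to\infty$ finishes, with no iteration and no growth hypothesis on $g$.
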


\begin{theorem}\label{smain3}
Let $u\in W_{loc}^{1,p_i}(\Omega)$ be such that $u>0$ a.e. in $\Omega.$ Assume that $1\leq\delta=\gamma\in A\cap I.$ Then $u$ is not a stable solution to the problem $(\ref{main equation})_s.$
\end{theorem}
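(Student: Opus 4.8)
Since $\delta=\gamma$, the nonlinearity collapses to a single power: $f(u)=-2u^{-\delta}$ and $f'(u)=2\delta\,u^{-\delta-1}>0$ has a fixed sign on all of $\mathbb{R}^N$. This is the feature that distinguishes the present case from Theorems \ref{smain1} and \ref{smain2}, where $0<\delta<\gamma$ forces one to restrict to $\{u\le1\}$ or $\{u\ge1\}$ so that a single power dominates; here one Caccioppoli estimate valid for every $u>0$ suffices. The plan is to fix a parameter $\alpha>p_N-1$ (to be matched against $\delta$ only at the end) and use the associated truncations $a_k,b_k$, inserting $\varphi=b_k(u)\psi_R^{\,m}$ into the weak formulation \eqref{weak solution} and $\varphi=a_k(u)\psi_R^{\,m/2}$ into the stability inequality \eqref{stable solution}, with $m$ large and after the routine $C^1_c$ approximation (legitimate since $a_k,b_k\in C^1$ and $\psi_R\in C^1_c$).

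Writing $E=\sum_{i=1}^N\int|u_i|^{p_i}|b_k'(u)|\psi_R^{\,m}\,dx$ for the anisotropic energy and $D=D(R)=\int g\,u^{-\delta-1}a_k(u)^2\psi_R^{\,m}\,dx$ for the singular integral to be controlled, the weak formulation (using that $b_k$ is decreasing) gives the identity $E=R_1+2\int g\,u^{-\delta}b_k(u)\psi_R^{\,m}\,dx$, where $R_1=m\sum_i\int|u_i|^{p_i-2}u_i\,b_k(u)\psi_R^{\,m-1}\partial_i\psi_R\,dx$ is a cutoff term. In the stability inequality I would expand $\partial_i(a_k(u)\psi_R^{\,m/2})$, bound it via $(x+y)^2\le2(x^2+y^2)$ into a principal term plus a cutoff term $T_2$, and use property (c), $a_k'(u)^2=\tfrac{(\alpha-1)^2}{4\alpha}|b_k'(u)|$, to recognize the principal term as $\tfrac{(\alpha-1)^2}{2\alpha}\sum_i(p_i-1)\int|u_i|^{p_i}|b_k'(u)|\psi_R^{\,m}\,dx\le(p_N-1)\tfrac{(\alpha-1)^2}{2\alpha}E$ (using $p_i\le p_N$). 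Substituting the energy identity and invoking property (a), $u\,b_k(u)\le a_k(u)^2$, to replace $\int g u^{-\delta}b_k\psi_R^{\,m}$ by $D$, I arrive at
$$
\Big(2\delta-(p_N-1)\frac{(\alpha-1)^2}{\alpha}\Big)D\le C\,(R_1+T_2).
$$
Since $\tfrac{(\alpha-1)^2}{\alpha}$ is increasing in $\alpha$, keeping the coefficient of $D$ positive puts an upper bound on the admissible $\alpha$.

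The remaining work is to absorb the cutoff terms $R_1,T_2$. In $R_1$ I would estimate the integrand by $|u_i|^{p_i-1}b_k(u)|\partial_i\psi_R|$ and split by Young's inequality into $\varepsilon\,|u_i|^{p_i}|b_k'(u)|\psi_R^{\,m}$ (summed over $i$ and reabsorbed on the left) plus a remainder $C_\varepsilon\,b_k(u)^{p_i}|b_k'(u)|^{1-p_i}\psi_R^{\,m-p_i}|\partial_i\psi_R|^{p_i}$; in $T_2$ I would proceed symmetrically with a remainder carrying $a_k(u)^{p_i}|a_k'(u)|^{2-p_i}$. Property (b) bounds both remainder factors by $C|u|^{p_i-\alpha-1}$, and then, using $g\ge c>0$ and $|\partial_i\psi_R|\le C/R$, Hölder's inequality on the annulus $B_{2R}(0)\setminus B_R(0)$ turns each remainder into $C\,D(2R)^{\lambda_i}R^{\theta}$, where $D(2R)$ is the corresponding integral on $B_{2R}(0)$, $\lambda_i\in(0,1)$ (here $\delta\ge1$ guarantees admissible Hölder exponents), and $\theta$ is a power fixed by the anisotropic volume count, in which the averaged exponent $q=\tfrac1N\sum_ip_i$ appears. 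A concluding Young inequality and a dyadic iteration in $R$ then yield $D(R)\le C\,R^{\theta}$, where $\theta<0$ exactly when $\alpha$ exceeds a threshold; this lower bound on $\alpha$ is what $\delta\in I=\cap_iI_i$ records, while the upper bound of the previous paragraph, evaluated at such $\alpha$, is what $\delta\in A$ records. Hence $\delta\in A\cap I$ is precisely the range in which some $\alpha>p_N-1$ meets both demands.

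Finally, fixing such an $\alpha$ and any $k$, on the one hand $D(R)\le C\,R^{\theta}\to0$ as $R\to\infty$; on the other hand, since $\psi_R\equiv1$ on $B_{R_0}(0)$ for $R\ge R_0$ and $g\,u^{-\delta-1}a_k(u)^2\ge c\,u^{-\delta-1}a_k(u)^2>0$ a.e., one has $D(R)\ge c\int_{B_{R_0}(0)}u^{-\delta-1}a_k(u)^2\,dx>0$, a fixed positive number. This contradiction shows that no stable solution can exist. The hard part will be the scaling bookkeeping of the third paragraph: the two constraints on $\alpha$ pull in opposite directions, and proving that their joint feasibility region is exactly $A\cap I$, with the correct appearance of $q$ coming out of the anisotropic Hölder step, is where the non-homogeneity of the operator makes the estimate delicate, and is the precise content of the Caccioppoli estimates of Section 5.
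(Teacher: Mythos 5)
Your proposal follows the paper's own route: the same truncations $a_k,b_k$ with properties a.--c., the same pair of test functions ($b_k(u)\times$ a cutoff power in \eqref{weak solution}, $a_k(u)\times$ the half power in \eqref{stable solution}), absorption of cutoff terms by Young, and power counting with $\psi_R$ as $R\to\infty$. However, there is a genuine quantitative gap at exactly the step you defer as ``the hard part.'' By expanding the stability term with $(x+y)^2\leq 2(x^2+y^2)$ you lock in the principal coefficient $2(p_i-1)$, hence via property c. the positivity condition $2\delta>(p_N-1)\frac{(\alpha-1)^2}{\alpha}$, i.e. $\delta>\frac{2(p_N-1)(\alpha-1)^2}{4\alpha}$. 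But the decay count forces $\alpha=2\beta+q-1$ to be taken close to $2l_2+q-1=\frac{4\delta}{N(q-1)}$: the condition $\delta\in I$ yields $N-p_i\theta_i'<0$ only for $\beta$ near $l_2$. The paper's coefficient $\frac{(\alpha-1)^2(N(q-1)+\epsilon)}{4\alpha(1-\epsilon)}$ stays below $\delta$ on the whole window because $\frac{(\alpha-1)^2}{\alpha}<\alpha<\frac{4\delta}{N(q-1)}$; your coefficient, with $2(p_N-1)$ in place of $N(q-1)$, does not whenever $2(p_N-1)>N(q-1)=\sum_i(p_i-1)$ (e.g. $N=3$, $p=(2.5,2.5,10)$) and $\delta$ is near the edge of $I$. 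So as written your feasibility region is strictly smaller than $A\cap I$ in such regimes, and the claim that the two constraints meet exactly on $A\cap I$ is asserted, not proved. The fix is the paper's Lemma \ref{a-priori}: avoid the factor $2$ by treating the cross terms $Y_i,Z_i$ with $\epsilon$-Young's inequality, absorbing $\frac{\epsilon}{2N}X_i$ into the principal part, keeping coefficients $(p_i-1+\frac{\epsilon}{N})$ and bounding $\sum_i(p_i-1+\frac{\epsilon}{N})X_i\leq(N(q-1)+\epsilon)\sum_iX_i$, which gives $\alpha_\epsilon=\delta-\frac{(\alpha-1)^2(N(q-1)+\epsilon)}{4\alpha(1-\epsilon)}>0$ for \emph{every} $\beta\in(l_1,l_2)$ once $\delta\in A$.

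The remaining deviations are benign. Exploiting $f(u)=-2u^{-\delta}$ directly is cosmetic: the paper's Corollary \ref{cor1}(3) treats $\delta=\gamma$ through the same mixed-power estimate. Your Hölder-on-the-annulus bound $C\,D(2R)^{\lambda_i}R^{\theta}$ with a dyadic iteration is a workable Farina-style alternative, but the paper's absorption is cleaner and iteration-free: after letting $k\to\infty$ by monotone convergence (so $b_k(u)\to u^{-\alpha}$), replace $\psi$ by $\psi^{\frac{2\beta+\delta+q-1}{q}}$ and apply Young with the exponents $\theta_i,\theta_i'$, using $\delta\geq1$ to discard $\psi^{(\delta-1)\theta_i'}\leq 1$ and $g\geq c>0$ to control the weight, so everything stays on a single ball. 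Your direction of use of property a. ($2\delta D$ on the stability side versus $\int g u^{-\delta}b_k\psi^m\leq D$ on the energy side) and your fixed-$k$ contradiction at the end are both correct; only the constant-tracking above needs repair for the theorem to hold in the full stated range.
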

\begin{theorem}\label{emain}
Let $u\in W_{loc}^{1,p_i}(\Omega)$ be such that $0<u\leq M$ a.e in $\Omega$, provided $M\in J$. Then $u$ is not a stable solution to the equation $(\ref{main equation})_e$. 
\end{theorem}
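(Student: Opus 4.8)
The plan is to argue by contradiction in the spirit of Farina \cite{Farina1}. Assuming that $u$ is a stable solution of $(\ref{main equation})_e$ with $0<u\le M$, I would test the weak formulation \eqref{weak solution} and the stability inequality \eqref{stable solution} against test functions built from the truncated functions $a_k,b_k$ and the cut-off $\psi_R$, combine them into a Caccioppoli type estimate, and then let $R\to\infty$ to derive a contradiction. The guiding observation is that on $(0,M]$ the exponential singularity $f(u)=-e^{1/u}$ behaves like the power singularity $-u^{-\delta}$ with effective exponent $\delta=1/M$: indeed $\frac{d}{du}\log e^{1/u}=-u^{-2}\le -M^{-1}u^{-1}=\frac{d}{du}\log u^{-1/M}$, so that $f'(u)=e^{1/u}u^{-2}$ plays on $(0,M]$ the role of $\delta u^{-\delta-1}$ with $\delta\leftrightarrow 1/M$. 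This is exactly why the hypothesis reads $M\in J$: substituting $\delta=1/M$ into the exponent $l_2$ of the singular power problem produces $l_3=\frac{2}{MN(q-1)}-\frac{q-1}{2}$, and $M\in J=B\cap C$ guarantees $l_3>l_1$ together with $l_3>0$ (as recorded in Section 2), the precise analogue of the conditions driving Theorems \ref{smain1}–\ref{smain3}.

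Concretely, inserting $\varphi=b_k(u)\psi_R^{\beta}$ (with $\beta$ large) into \eqref{weak solution}, and using that $b_k$ is decreasing while $f(u)<0$, gives
\begin{equation}\nonumber
\sum_{i=1}^N\int_{\mathbb{R}^N}|u_i|^{p_i}|b_k'(u)|\psi_R^{\beta}\,dx=\int_{\mathbb{R}^N}g\,e^{1/u}b_k(u)\psi_R^{\beta}\,dx+\beta\sum_{i=1}^N\int_{\mathbb{R}^N}|u_i|^{p_i-2}u_i\,b_k(u)\psi_R^{\beta-1}\partial_{x_i}\psi_R\,dx .
\end{equation}
Next I would choose $\varphi=a_k(u)\psi_R^{\beta/2}$ in \eqref{stable solution}; expanding $\partial_{x_i}\varphi$, applying Young's inequality, and using the identity $a_k'(t)^2=\frac{(\alpha-1)^2}{4\alpha}|b_k'(t)|$ turns the leading term into $\sum_i(p_i-1)\frac{(\alpha-1)^2}{4\alpha}\int|u_i|^{p_i}|b_k'(u)|\psi_R^{\beta}$. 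Substituting the identity above, using $a_k(t)^2\ge t\,b_k(t)$ and $u\le M$ to bound the reaction term by $\int_{\mathbb{R}^N}g\,e^{1/u}b_k(u)\psi_R^{\beta}\le M\int_{\mathbb{R}^N}g\,f'(u)a_k(u)^2\psi_R^{\beta}$, I would absorb it into the left-hand side. Provided the resulting constant stays strictly below $1$ (secured by taking $\alpha>p_N-1$ suitably and by the smallness of $M$), this leaves
\begin{equation}\nonumber
\int_{\mathbb{R}^N}g\,f'(u)a_k(u)^2\psi_R^{\beta}\,dx\le C\sum_{i=1}^N\Big(\int_{\mathbb{R}^N}|u_i|^{p_i-1}b_k(u)\psi_R^{\beta-1}|\partial_{x_i}\psi_R|\,dx+\int_{\mathbb{R}^N}|u_i|^{p_i-2}a_k(u)^2\psi_R^{\beta-2}|\partial_{x_i}\psi_R|^2\,dx\Big),
\end{equation}
where the right-hand integrals are supported on $B_{2R}\setminus B_R$.

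It then remains to estimate these boundary terms. Using Young's inequality to split off and reabsorb the bulk contribution $\int|u_i|^{p_i}|b_k'(u)|\psi_R^{\beta}$, and the property $a_k^{p_i}|a_k'|^{2-p_i}+b_k^{p_i}|b_k'|^{1-p_i}\le C|t|^{p_i-\alpha-1}$ to control the surviving potential factors, each boundary term is dominated by $\int_{B_{2R}\setminus B_R}u^{p_i-\alpha-1}|\partial_{x_i}\psi_R|^{p_i}\psi_R^{\cdots}\,dx$. A Hölder inequality against the good measure $g\,f'(u)a_k(u)^2\psi_R^{\beta}$, together with the fact that $e^{1/u}u^{-m}$ raised to a negative power stays bounded on $(0,M]$ (with constant depending on $M,\alpha$ but not on $k$), should yield a Caccioppoli estimate
\begin{equation}\nonumber
\int_{\mathbb{R}^N}g\,f'(u)a_k(u)^2\psi_R^{\beta}\,dx\le C\,R^{\,N-\theta},\qquad \theta=\theta(N,q,p_1,\dots,p_N,M)>0,
\end{equation}
the positivity of $\theta$ being precisely the content of $l_3>l_1$ and $l_3>0$. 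Finally, since $g\ge c>0$, while $a_k(u)^2\ge M^{1-\alpha}$ and $f'(u)\ge M^{-2}$ on $\{u\ge 1/k\}$, the left-hand side is bounded below by $c'\,|B_R\cap\{u\ge 1/k\}|$; letting $k\to\infty$ gives the lower bound $c''R^N$, whence $c''R^N\le C R^{N-\theta}$, impossible as $R\to\infty$. This contradiction rules out any such stable solution.

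I expect the main obstacle to be the combination step producing the Caccioppoli estimate. Because the operator is non-homogeneous, the anisotropic sum $\sum_i(p_i-1)\int|u_i|^{p_i}|b_k'(u)|\psi_R^{\beta}$ cannot be treated uniformly, so the exponents $\beta,\alpha$ and the Young and Hölder parameters for each direction $i$ must be tuned simultaneously so that (i) the reaction term and all bulk gradient terms are absorbed with a constant strictly less than $1$ that is independent of $k$, and (ii) the remaining cut-off integral decays like $R^{N-\theta}$ with $\theta>0$. Keeping every constant independent of the truncation parameter $k$ while passing to the limit is the delicate point, carried out through the Caccioppoli estimates of Section~5.
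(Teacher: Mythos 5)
Your proposal is correct and follows essentially the same route as the paper: testing \eqref{weak solution} with $b_k(u)\psi^q$ and \eqref{stable solution} with $a_k(u)\psi^{q/2}$, combining them via properties (a)--(c) of $a_k,b_k$ with the effective exponent $\delta\leftrightarrow 1/M$ so that $\alpha_\epsilon=\frac{1}{M}-\frac{(\alpha-1)^2(N(q-1)+\epsilon)}{4\alpha(1-\epsilon)}>0$ for $\beta\in(l_1,l_3)$, then scaling with $\psi_R$ and letting $R\to\infty$ --- this is exactly Lemma \ref{a-priori} together with Corollary \ref{cor2}. The only cosmetic differences are in the endgame (the paper sends $k\to\infty$ by monotone convergence and concludes $\int_{\Omega}g\,u^{-(2\beta+q)}\,dx=0$, contradicting $u>0$, whereas you lower-bound the left side by $c\,R^N$ using $u\leq M$) and a minor imprecision of attribution: the decay exponent $N-2\beta-q<0$ comes from $M\in C$, i.e. $2l_3+q>N$, not literally from ``$l_3>l_1$ and $l_3>0$''.
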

We present the proof of the above theorems in the following two sections.
\section{Proof of existence results}
For $n\in\mathbb{N}$, define $g_n(x)=\text{min}\{g(x),n\}$ and consider the following approximnated problem
\begin{equation}\label{approx}
-\sum_{i=1}^{N}\frac{\partial}{\partial x_i}(|u_i|^{p_i-2}u_i)=g_n(x)e^\frac{1}{(u+\frac{1}{n})}\text{ in }\Omega.
\end{equation}
\begin{lemma}\label{approxexi}
Let \begin{enumerate}
\item $g\in L^m(\Omega)$ for some $m>\frac{\bar{p}^{*}}{\bar{p}^{*}-\bar{p}}$ if $\bar{p}<N$ or
\item $g\in L^m(\Omega)$ for some $m>\frac{r}{r-p_N}$ if $\bar{p}\geq N$ where $r>p_N.$
\end{enumerate}
Then for every $n\in\mathbb{N}$ the problem \eqref{approx} has a positive solution $u_n\in W_{0}^{1,p_i}(\Omega).$ Moreover, one has
\begin{itemize}
\item[(i)] $||u_n||_{L^\infty(\Omega)}\leq C$ for some constant $C$ independent of $n$.
\item[(ii)] $u_{n+1}\geq u_n$ and each $u_n$ is unique.
\item[(iii)] there exists a positive constant $c_\omega>0$ such that for every $\omega\subset\subset\Omega$ we have $u_n\geq c_{\omega}>0.$
\end{itemize}
\end{lemma}
\begin{proof}
\textbf{Existence:} Let $v\in L^r(\Omega)$ for some $r\geq 1.$ Then the problem
$$
-\sum_{i=1}^{N}\frac{\partial}{\partial x_i}(|u_i|^{p_i-2}u_i)=g_n(x)e^\frac{1}{(|v|+\frac{1}{n})}
$$
has a unique solution $u=A(v)\in W_{0}^{1,p_i}(\Omega)$ since the r.h.s belongs to $L^\infty(\Omega)$, see \cite{Castro}. Choosing $u=A(v)$ as a test function and using Theorem \ref{Sobolevemb} together with H$\ddot{\text{o}}$lder's inequality we obtain 
$$
||u||_{L^r(\Omega)}\leq C_N
$$
for some constant $C_N$ independent of $u.$ Now arguing as in Lemma 2.1 of \cite{Aniso1} gives the existence of $u_n.$
\begin{itemize}
\item[(i)]
\begin{itemize}
\item[(1)] Let $\bar{p}<N$ and $g\in L^m(\Omega)$ for some $m>\frac{\bar{p}^{*}}{\bar{p}^{*}-\bar{p}}.$ Choosing $G_k(u_n)=(u_n-k)^{+}$ for $k>1$ as a test function in \eqref{approxexi} we get
$$
||G_k(u_n)||_{W_{0}^{1,p_i}(\Omega)}\leq e\Big(\int_{\Omega}g|G_k(u_n)|\,dx\Big)^\frac{1}{p_i}.
$$
Using Theorem \ref{Sobolevemb} with $r=\bar{p}^{*}$ and H$\ddot{\text{o}}$lder's inequality  we get
\begin{align*}
||G_k(u_n)||_{L^{\bar{p}^{*}}(\Omega)}\leq c\Big(\int_{A(k)}|g|^{\bar{p}^{*'}}\,dx\Big)^\frac{\bar{p}^{*}-1}{\bar{p}^{*}(\bar{p}-1)}.
\end{align*}
Now for $1<k<h$ denote by $A(h)=\{x\in\Omega:u(x)>h\}$, we get
\begin{align*}
&(h-k)^{p_i}|A(h)|^\frac{p_i}{\bar{p}^{*}}\\
&\leq\Big(\int_{A(k)}|G_k(u_n)|^{\bar{p}^{*}}\Big)^\frac{p_i}{\bar{p}^{*}}\\
&\leq c\Big(\int_{A(k)}|g|^{\bar{p}^{*'}}\,dx\Big)^\frac{p_i(\bar{p}^{*}-1)}{\bar{p}^{*}(\bar{p}-1)}.
\end{align*}
Now using H$\ddot{\text{o}}$lder's inequality with exponents $q=\frac{m}{\bar{p}^{*'}}$ and $q'=\frac{q}{q-1}$ we get
\begin{align*}
(h-k)^{p_i}|A(h)|^\frac{p_i}{\bar{p}^{*}}\leq c||g||^{\frac{p_i}{\bar{p}-1}}_{L^m(\Omega)}|A(k)|^{\frac{p_i(\bar{p}^{*}-1)(m-\bar{p}^{*'})}{\bar{p}^{*}m(\bar{p}-1)}}.
\end{align*}
Therefore we have
$$
|A(h)|\leq \frac{c||g||^{\frac{\bar{p}^{*}}{\bar{p}-1}}_{L^m(\Omega)}}{(h-k)^{\bar{p}^{*}}}|A(k)|^\beta,
$$
where $\beta=\frac{(\bar{p}^{*}-1)(m-\bar{p}^{*'})}{m(\bar{p}-1)}>1$ since $m>\frac{\bar{p}^{*}}{\bar{p}^{*}-\bar{p}}.$ By Stampacchia's result (\cite{Stamp}) we get $||u_n||_{L^\infty(\Omega)}\leq C$ where $C$ is independent of $n.$

\item[(2)] Choosing $G_k(u_n)=(u_n-k)^{+}$ as a test function in \eqref{approxexi} and using H${\ddot{\text{o}}}$lder's inequality we get
$$
||G_k(u_n)||_{W_{0}^{1,p_i}(\Omega)}\leq e||g||_{L^{r'}(A(k))}^\frac{1}{p_i-1}.
$$
Using H$\ddot{\text{o}}$lder's inequality with exponents $\frac{m}{r'}$ and $\frac{m}{m-r'}$ we get
$$
||G_k(u_n)||_{W_{0}^{1,p_i}(\Omega)}\leq c||g||_{L^m(\Omega)}^\frac{1}{p_i-1}|A(k)|^\frac{(m-r')}{mr'(p_i-1)}.
$$
Now for $1<k<h$ we have
\begin{align*}
&(h-k)^{p_i}|A(h)|^\frac{p_i}{r}\\
&\leq\Big(\int_{A(h)}(u-k)^r\,dx\Big)^\frac{p_i}{r}\\
&\leq\Big(\int_{A(k)}(u-k)^r\,dx\Big)^\frac{p_i}{r}\\
&\leq \sum_{i=1}^{N}\int_{\Omega}|\partial_{i}G_k(u_n)|^{p_i}\,dx\\
&\leq c||g||^{p_i'}_{L^m(\Omega)}|A(k)|^\frac{p_i(m-r')}{mr'(p_i-1)}.
\end{align*}
Therefore we have
$$
|A(h)|\leq c\frac{||g||^\frac{r}{p_i-1}|A(k)|^\gamma}{(h-k)^r},
$$
where $\gamma=\frac{r(m-r')}{mr'(p_i-1)}>1$ since $m>\frac{r}{r-p_{N}}.$ By Stampacchia's result (\cite{Stamp}) we get $||u_n||_{L^\infty(\Omega)}\leq C$ where $C$ is independent of $n.$
\end{itemize}
\item[(ii)] Let $u_n$ and $u_{n+1}$ satisfies the equations \eqref{approxexi}. Then for every $\phi\in W_{0}^{1,p_i}(\Omega)$
\begin{equation}\label{mono1}
\sum_{i=1}^{N}\int_{\Omega}|(u_n)_i|^{p_i-2}(u_n)_i \phi_i\,dx=\int_{\Omega}g_{n}e^\frac{1}{(u_n+\frac{1}{n})}\phi\,dx
\end{equation}
and
\begin{equation}\label{mono2}
\sum_{i=1}^{N}\int_{\Omega}|(u_{n+1})_i|^{p_i-2}(u_{n+1})_i \phi_i\,dx=\int_{\Omega}g_{n+1}e^\frac{1}{(u_{n+1}+\frac{1}{n+1})}\phi\,dx.
\end{equation}

Choosing $\phi=(u_n-u_{n+1})^{+}$ as a test function and subtracting \eqref{mono1} and \eqref{mono2} we have
\begin{align*}
&\sum_{i=1}^{N}\int_{\Omega}\Big(|(u_n)_{i}|^{p_i-2}(u_{n})_{i}-|(u_{n+1})_{i}|^{p_i-2}(u_{n+1})_i\Big)(u_n-u_{n+1})_{i}^{+}\,dx\\
&\leq \int_{\Omega}{g_{n+1}(x)}\left\{e^\frac{1}{(u_n+\frac{1}{n})}-e^\frac{1}{(u_{n+1}+\frac{1}{n+1})}\right\}(u_n-u_{n+1})_{i}^{+}\,dx\leq 0.
\end{align*}
Using the algebraic inequality (Lemma A.0.5 of \cite{Pe}) we get for any $p_i\geq 2$
$$||(u_n-u_{n+1})^{+}||_{W_{0}^{1,p_i}(\Omega)}=0.$$ Therefore (i) holds. The uniqueness follows similarly as in the monotonicity. 

\item[(iii)] Observe that $u_1\in L^\infty(\Omega)$ by using (i). Hence
$$
\sum_{i=1}^{N}\int_{\Omega}|(u_1)_i|^{p_i-2}(u_1)_i \phi_i\,dx=g_{1}e^\frac{1}{(u_{1}+1)}\geq g_1 e^\frac{1}{||u_1||_{\infty}+1}.
$$
Since $g$ is nonnegative and not identically zero, by the strong maximum principle (Theorem 3.18 of \cite{castrothesis}) we get the property (iii).
\end{itemize}
\end{proof}
\textbf{Proof of Theorem \ref{Mainexi}}
Let $\bar{p}<N$ such that $\bar{p}^{*}\geq p_N$ and $\Omega=\cup_{k}\Omega_k$ where $\Omega_k\subset\subset\Omega_{k+1}$ for each $k$. Let $\gamma_k=\text{inf}_{\Omega_k}\,u_n>0.$ Choosing $\phi=(u_n-\gamma_1)^{+}$ as a test function in \eqref{approxexi}, using Lemma \ref{approxexi} and Theorem \ref{Sobolevemb} we get
\begin{align*}
&\sum_{i=1}^{N}\int_{\{u_n>\gamma_1\}}|(u_n)_i|^{p_i}\,dx\\
&=\int_{\{u_n>\gamma_1\}}g_ne^\frac{1}{(u_n+\frac{1}{n})}(u_n-\gamma_1)^{+}\,dx\\
&\leq c||g||_{L^m(\Omega)}||(u_{n}-\gamma_{1})^{+}||_{W_{0}^{1,p_i}(\Omega)}
\end{align*}
where $c$ is a constant independent of $n$.
Using Lemma \ref{approxexi} and the fact
$$
||u_n||_{W^{1,p_i}(\Omega_1)}\leq ||u_n||_{W^{1,p_i}(\{u_n>\gamma_1\})}
$$
we get the sequence $\{u_n\}$ is uniformly bounded in $W^{1,p_i}(\Omega_1)$ and as a consequence of Theorem \ref{Sobolevemb} it has a subsequence $\{u^{1}_{n_k}\}$ converges weakly in $W^{1,p_i}(\Omega_1)$ and strongly in $L^{p_i}(\Omega_1)$ and almost everywhere in $\Omega_1$ to $u_{\Omega_1}\in W^{1,p_i}(\Omega_1)$, say.

Proceeding in the same way for any $k$, we obtain a subsequence $\{u_{n_l}^{k}\}$ of $\{u_n\}$ such that $u_{n_l}^{k}$ converges weakly in $W^{1,p_i}(\Omega_k)$, strongly in $L^{p_i}(\Omega_k)$ and almost everywhere to $u_{\Omega_k}\in W^{1,p_i}(\Omega_k).$ We may assume $u_{n_l}^{k+1}$ is a subsequence of $u_{n_l}^{k}$ for every $k$, and that $n_k^{k}\to\infty$ as $k\to\infty.$ Therefore $u_{\Omega_{k+1}}=u_{\Omega_k}$ on $\Omega_k$. Define $u=u_{\Omega_1}$ and $u=u_{\Omega_{k+1}}$ on $\Omega_{k+1}\setminus \Omega_k$ for each $k$. Therefore by our construction the diagonal subsequence $\{u_{n_k}\}:=\{u_{n_k}^{k}\}$ converges weakly to $u$ in $W^{1,p_i}_{loc}(\Omega_k)$, strongly in $L^{p_i}(\Omega_k)$ and almost everywhere in $\Omega.$ Now we claim that $\{u_{n_k}\}$ converges strongly to $u$ in $W^{1,p_i}_{loc}(\Omega_k).$ Let $\Omega'\subset\subset\Omega.$ Let $\phi\in C_c^{\infty}(\Omega)$ such that $0\leq\phi\leq 1$ in $\Omega$, $\phi=1$ on $\Omega'$ and let $k_1\geq 1$ such that $\text{supp}\,\phi\subset\Omega_{k_1}.$ For every $k,m\geq 1$ we have
\begin{align*}
&\sum_{i=1}^{N}\int_{\Omega'}\Big(|(u_{n_k})_{i}|^{p_i-2}(u_{n_k})_i-|(u_{n_m})_{i}|^{p_i-2}(u_{n_m})_i\Big)(u_{n_k}-u_{n_m})_{i}\,dx\\
&\leq \sum_{i=1}^{N}\int_{\Omega}\Big(|(u_{n_k})_{i}|^{p_i-2}(u_{n_k})_i-|(u_{n_m})_{i}|^{p_i-2}(u_{n_m})_{i}\Big)\Big(\phi(u_{n_k}-u_{n_m})\Big)_{i}\,dx-\\
&\sum_{i=1}^{N}\int_{\Omega_{k_1}}\Big\{\Big(|(u_{n_k})_{i}|^{p_i-2}(u_{n_k})_i-|(u_{n_m})_{i}|^{p_i-2}(u_{n_m})_i\Big).\phi_{i}\Big\}(u_{n_k}-u_{n_m})\,dx\\
&:=A-B.
\end{align*}
Now the fact that $u_{n_k}$ is uniformly bounded in $W^{1,p_i}(\Omega_{k_1})$ and converges strongly in $L^{p_i}(\Omega_{k_1})$ implies $B\to 0$ as $k,m\to\infty.$ Choosing $\psi=\phi(u_{n_k}-u_{n_m})$ and either $n=n_k$ or $n=n_m$ we get for $l=k,m$
\begin{align*}
&\Big|(u_{n_l})_{i}|^{p_i-2}(u_{n_l})_i\Big(\phi(u_{n_k}-u_{n_m})\Big)_{i}\,dx\Big|\\
&\leq\int_{\Omega_{k_1}}g_n(x)e^\frac{1}{(u_{n_l}+\frac{1}{n_l})}|u_{n_k}-u_{n_m}|\,dx.
\end{align*}
Now Lemma \ref{approxexi}, $g\in L^{m}(\Omega)$ and the strong convergence of $u_{n_k}$ gives $A\to 0$ as $k,m\to\infty.$ Now the algebraic inequality (Lemma A.0.5 of \cite{Pe}) gives 
$$
\sum_{i=1}^{N}\int_{\Omega'}|(u_{n_k})_i-(u_{n_m})_i|^{p_i}\,dx\to 0
$$
as $k,m\to\infty.$ Therefore for any $\phi\in C_c^{1}(\Omega)$ we have
$$
\sum_{i=1}^{N}\int_{\Omega}|(u_{n_k})_{i}|^{p_i-2}(u_{n_k})_{i}\phi_i\,dx=\sum_{i=1}^{N}\int_{\Omega}|u_{i}|^{p_i-2}u_{i}\phi_i\,dx.
$$
Lemma \ref{approxexi} and the fact $u_{n_{k}}\geq c_{\text{supp}\,\phi}>0$ gives
$$
\Big|\int_{\Omega}g_{n_k}(x)e^\frac{1}{(u_{n_k}+\frac{1}{n_k})}\phi\,dx\Big|\leq e^{\frac{1}{c_{\text{supp}\,\phi}}}||\phi||_{L^\infty(\Omega)}||g||_{L^1(\Omega)}.
$$
By Lebesgue dominated theorem we obtain
$$
\int_{\Omega}g_{n_k}(x)e^\frac{1}{(u_{n_k}+\frac{1}{n_k})}\phi\,dx=\int_{\Omega}g(x)e^\frac{1}{u}\phi\,dx.
$$
Hence $u\in W^{1,p_i}_{loc}(\Omega)$ is a weak solution of the problem \eqref{weak solution}. Now observe that $(u_{n_k}-\epsilon)^{+}$ in bounded in $W_{0}^{1,p_i}(\Omega)$ and it has a subsequence converges to $v$ weakly in $W_{0}^{1,p_i}(\Omega)$. Since $u_{n_{k}}$ converges almost everywhere to $u$, we have $v=(u-\epsilon)^{+}\in W_{0}^{1,p_i}(\Omega).$ The case $\bar{p}\geq N$ follows similarly using Theorem \ref{Sobolevemb}.
\section{Proof of nonexistence results}
To prove our main results we establish the following a priori estimate on the stable solution to the problem $\eqref{main equation}.$
\subsection{A priori estimate}
\begin{lemma}\label{a-priori}
Let $u\in W_{loc}^{1,p_i}(\Omega)$ be a positive stable solution to either of the equation $(\ref{main equation})_s$ or $(\ref{main equation})_e$ and $\alpha>p_N-1$ be fixed. Then for every $\epsilon\in(0,\alpha)$, there exists a positive constant $C=C_{\epsilon}(p_1,p_2,\cdots,p_N,q,\alpha)$ such that for any nonnegative $\psi\in C_c^{1}(\Omega)$, one has
\begin{equation}\label{apriorieqn}
\begin{aligned}
&\int_{\Omega}g(x) uf'(u)b_{k}(u)\psi^q\,dx\\
&\leq C\sum_{i=1}^{N}\int_{\Omega}u^{p_i-\alpha-1}|\psi_i|^{p_i}\psi^{q-p_i}\,dx\\
&\quad -\frac{(\alpha-1)^2(N(q-1)+\epsilon)}{4\alpha(1-\epsilon)}\int_{\Omega} g(x) f(u)b_{k}(u)\psi^q\,dx.
\end{aligned}
\end{equation}
\end{lemma}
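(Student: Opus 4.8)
The plan is to test the stability inequality (\ref{stable solution}) and the weak formulation (\ref{weak solution}) with functions built from the truncations $a_k,b_k$ and the power $\psi^q$, and then to balance the ``good'' anisotropic gradient quantity
$$
S:=\sum_{i=1}^{N}\int_{\Omega}|b_k'(u)|\,|u_i|^{p_i}\psi^q\,dx
$$
against the potential integral $\int_{\Omega}f(u)b_k(u)\psi^q\,dx$. First I would put $\varphi=a_k(u)\psi^{q/2}$ into (\ref{stable solution}). Since $f'(u)>0$ in both cases ($f'(u)=\delta u^{-\delta-1}+\gamma u^{-\gamma-1}$ for $(\ref{main equation})_s$ and $f'(u)=u^{-2}e^{1/u}$ for $(\ref{main equation})_e$), property (a) yields $uf'(u)b_k(u)\le f'(u)a_k(u)^2$, so the left-hand side of (\ref{apriorieqn}) is controlled by $\int_{\Omega}f'(u)\varphi^2\,dx$. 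Writing $\varphi_i=a_k'(u)u_i\psi^{q/2}+\tfrac{q}{2}a_k(u)\psi^{q/2-1}\psi_i$ and squaring, the right-hand side of (\ref{stable solution}) splits into a pure gradient term $\mathcal{T}_1=\sum_i(p_i-1)\int a_k'(u)^2|u_i|^{p_i}\psi^q\,dx$, a cross term $\mathcal{T}_2$ carrying one factor $\psi_i$, and a term $\mathcal{T}_3$ carrying $\psi_i^2$.

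The core step is the estimate of $\mathcal{T}_1$: property (c) turns $a_k'(u)^2$ into $\tfrac{(\alpha-1)^2}{4\alpha}|b_k'(u)|$, and the elementary inequality $p_i-1\le\sum_j(p_j-1)=N(q-1)$ (which holds because every $p_j\ge2$) gives $\mathcal{T}_1\le\tfrac{(\alpha-1)^2}{4\alpha}N(q-1)\,S$, already carrying the exact constant of the statement. For $\mathcal{T}_2$ and $\mathcal{T}_3$ I would apply Young's inequality summand by summand, peeling off a small multiple of $a_k'(u)^2|u_i|^{p_i}\psi^q$ (absorbed into $S$ and responsible for the $+\epsilon$ correction) and leaving a remainder of the shape $a_k(u)^{p_i}|a_k'(u)|^{2-p_i}\psi^{q-p_i}|\psi_i|^{p_i}$, which the first half of property (b) bounds by $|u|^{p_i-\alpha-1}|\psi_i|^{p_i}\psi^{q-p_i}$. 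Collecting these yields the intermediate bound
$$
\int_{\Omega}uf'(u)b_k(u)\psi^q\,dx\le\frac{(\alpha-1)^2}{4\alpha}\bigl(N(q-1)+\epsilon\bigr)S+C_\epsilon\sum_{i=1}^{N}\int_{\Omega}|u|^{p_i-\alpha-1}|\psi_i|^{p_i}\psi^{q-p_i}\,dx.
$$

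It remains to trade $S$ for the potential integral. Testing (\ref{weak solution}) with $\varphi=b_k(u)\psi^q$ and using $b_k'\le0$ produces the identity $S=q\sum_i\int_{\Omega}|u_i|^{p_i-2}u_i\,b_k(u)\psi^{q-1}\psi_i\,dx-\int_{\Omega}f(u)b_k(u)\psi^q\,dx$. Bounding the cross term on the right by Young's inequality together with the second half $b_k^{p_i}|b_k'|^{1-p_i}\le C|u|^{p_i-\alpha-1}$ of property (b) gives $q\sum_i\int_{\Omega}|u_i|^{p_i-2}u_i b_k(u)\psi^{q-1}\psi_i\,dx\le\epsilon S+C_\epsilon\sum_i\int_{\Omega}|u|^{p_i-\alpha-1}|\psi_i|^{p_i}\psi^{q-p_i}\,dx$, so that $(1-\epsilon)S\le-\int_{\Omega}f(u)b_k(u)\psi^q\,dx+C_\epsilon\sum_i\int_{\Omega}|u|^{p_i-\alpha-1}|\psi_i|^{p_i}\psi^{q-p_i}\,dx$; this self-absorption of $S$ is exactly what manufactures the factor $(1-\epsilon)$ in the denominator of the claimed constant. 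Inserting the resulting upper bound for $S$ into the intermediate estimate delivers (\ref{apriorieqn}).

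The step I expect to be the main obstacle is the anisotropic book-keeping in the two Young splittings: the exponents have to be chosen coordinate by coordinate so that every remainder collapses onto the single universal expression $|u|^{p_i-\alpha-1}|\psi_i|^{p_i}\psi^{q-p_i}$ --- this is precisely the purpose of the two inequalities in property (b) --- while the gradient pieces that are reabsorbed must stay below the prescribed $\epsilon$-threshold. One must also track signs throughout (using $f'(u)>0$, $b_k'\le0$ and $f(u)<0$, so that $-\int_{\Omega} f(u)b_k(u)\psi^q\,dx\ge0$) to be sure every absorption runs in the admissible direction.
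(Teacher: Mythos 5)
Your proposal is correct and takes essentially the same route as the paper: you test the stability inequality with $\varphi=a_k(u)\psi^{q/2}$ and the weak formulation with $\varphi=b_k(u)\psi^q$, use property (c) to convert $a_k'(u)^2$ into $\frac{(\alpha-1)^2}{4\alpha}|b_k'(u)|$, property (a) with $f'(u)>0$ to pass from $uf'(u)b_k(u)$ to $f'(u)a_k(u)^2$, property (b) to collapse all Young remainders onto $|u|^{p_i-\alpha-1}|\psi_i|^{p_i}\psi^{q-p_i}$, and the self-absorption $(1-\epsilon)S\le -\int_\Omega f(u)b_k(u)\psi^q\,dx+C_\epsilon\sum_i\int_\Omega|u|^{p_i-\alpha-1}|\psi_i|^{p_i}\psi^{q-p_i}\,dx$, exactly as in the paper's Steps 1 and 2 (merely in the opposite order of presentation). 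Even your constant bookkeeping matches: the paper realizes your bound $p_i-1\le\sum_j(p_j-1)=N(q-1)$ together with the peeled $\epsilon$-pieces as $p_i-1+\frac{\epsilon}{N}\le\sum_j\bigl(p_j-1+\frac{\epsilon}{N}\bigr)=N(q-1)+\epsilon$ applied to the nonnegative terms $X_i$.
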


As a corollary of Lemma \ref{a-priori} we obtain the following Caccioppoli type estimates.

\begin{corollary}\label{cor1}
Let $u\in W_{loc}^{1,p_i}(\Omega)$ be a positive stable solution to the problem $\eqref{main equation}_s$. Then the following holds:
\begin{enumerate}
\item[(1)] Assume that $0<u\leq 1$ a.e. in $\Omega$ and $1\leq\delta<\gamma$ be such that $\delta\in A\cap I$. Then for any $\beta\in(l_1,l_2),$ there exists a constant $C=C(p_1,p_2,\cdots,p_N,q,N,\beta)$ such that for every $\psi\in C_c^{1}(\Omega)$ with $0\leq\psi\leq 1$ in $\Omega$, we have
\begin{equation}\label{cac1}
\int_{\Omega}g(x)(\frac{\psi}{u})^{2\beta+\delta+q-1}\,dx\leq C\sum_{i=1}^{N}\int_{\Omega}|\psi_{i}|^{p_i\theta_i^{'}}\,dx
\end{equation}  where
\[
\theta_i=\frac{2\beta+\delta+q-1}{2\beta+q-p_i}, \quad
\theta_i^{'}=\frac{2\beta+\delta+q-1}{\delta+p_i-1}.
\]

\item[(2)] Assume that $u\geq 1$ a.e. in $\Omega$ and $0<\delta<\gamma$ be such that $\delta\in A$ and $\gamma\in I\cap[1,\infty).$ Then for any $\beta\in(l_1,l_2),$ there exists a constant $C=C(p_1,p_2,\cdots,p_N,q,N,\beta)$ such that for every $\psi\in C_c^{1}(\Omega)$ with $0\leq\psi\leq 1$ in $\Omega$, we have
\begin{equation}\label{caci1}
\int_{\Omega}g(x)(\frac{\psi}{u})^{2\beta+\gamma+q-1}\,dx\leq C\sum_{i=1}^{N}\int_{\Omega}|\psi_{i}|^{p_i\zeta_i^{'}}\,dx
\end{equation}  where
\[
\zeta_i=\frac{2\beta+\gamma+q-1}{2\beta+q-p_i}, \quad
\zeta_i^{'}=\frac{2\beta+\gamma+q-1}{\gamma+p_i-1}.
\]
\item[(3)] Assume that $u>0$ a.e. in $\Omega$ and $1\leq\delta=\gamma\in A\cap I.$ Then for any $\beta\in(l_1,l_2),$ there exists a constant $C=C(p_1,p_2,\cdots,p_N,q,N,\beta)$ such that for every $\psi\in C_c^{1}(\Omega)$ with $0\leq\psi\leq 1$ in $\Omega$, we have
\begin{equation}\label{cacio1}
\int_{\Omega}g(x)(\frac{\psi}{u})^{2\beta+\delta+q-1}\,dx\leq C\sum_{i=1}^{N}\int_{\Omega}|\psi_{i}|^{p_i\theta_i^{'}}\,dx
\end{equation}  where
\[
\theta_i=\frac{2\beta+\delta+q-1}{2\beta+q-p_i}, \quad
\theta_i^{'}=\frac{2\beta+\delta+q-1}{\delta+p_i-1}.
\]
\end{enumerate}
\end{corollary}

\begin{corollary}\label{cor2}
Let $u\in W_{loc}^{1,p_i}(\Omega)$ be a positive stable solution to the problem $\eqref{main equation}_e$ such that $0<u\leq M$ a.e. in $\Omega$ for some positive constant $M.$ Then for any $\beta\in(l_1,l_3)$ there exists a constant $C=C(p_1,p_2,\cdots,p_N,q,N,\beta)$ such that for every $\psi\in C_c^{1}(\Omega)$ with $0\leq\psi\leq 1$ in $\Omega$, we have
\begin{equation}\label{cac2}
\int_{\Omega}g(x)(\frac{\psi}{u})^{2\beta+q}\,dx\leq C\sum_{i=1}^{N}\int_{\Omega}|\psi_{i}|^{2\beta+q}\,dx.
\end{equation}
\end{corollary}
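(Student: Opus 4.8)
The plan is to specialize the a priori estimate of Lemma~\ref{a-priori} (in which the weight $g$ accompanies the source $f(u)$, consistently with $(\ref{main equation})_e$) to $f(u)=-e^{1/u}$ and then run a Hölder--Young absorption. For this nonlinearity $f'(u)=u^{-2}e^{1/u}$, so that $uf'(u)=u^{-1}e^{1/u}$ and $-f(u)=e^{1/u}$. Fix $\beta\in(l_1,l_3)$ and set $\alpha:=2\beta+q-1$; since $\beta>l_1=\frac{p_N-q}{2}$ we have $\alpha>p_N-1$, so Lemma~\ref{a-priori} is available for this $\alpha$. I would apply it not to $\psi$ directly but to the admissible test function $\psi^{t}$ with $t:=\frac{2\beta+q}{q}>1$ (legitimate because $0\le\psi\le1$ forces $\psi^{t}\in C^1_c(\Omega)$): this replaces $\psi^{q}$ by $\psi^{tq}=\psi^{2\beta+q}$ on the left and $\psi^{q-p_i}$ by $\psi^{tq-p_i}=\psi^{2\beta+q-p_i}$ in the gradient term, which is exactly the $\psi$--homogeneity needed to match the target.

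The first main step is the absorption of the exponential terms. On $\{0<u\le M\}$ one has $uf'(u)=u^{-1}e^{1/u}\ge M^{-1}e^{1/u}=M^{-1}\bigl(-f(u)\bigr)$, so the left-hand side of \eqref{apriorieqn} dominates $M^{-1}\int_\Omega g\,e^{1/u}b_k(u)\psi^{2\beta+q}\,dx$, while the last term on the right equals $C'\int_\Omega g\,e^{1/u}b_k(u)\psi^{2\beta+q}\,dx$ with $C'=\frac{(\alpha-1)^2(N(q-1)+\epsilon)}{4\alpha(1-\epsilon)}$. The hypothesis $\beta<l_3$ is precisely $M<\frac{4}{\alpha N(q-1)}$, and since $\alpha>1$ this bound is strictly below $\frac{4\alpha}{(\alpha-1)^2N(q-1)}$; hence for $\epsilon>0$ small enough $M^{-1}>C'$ and the exponential terms can be moved to the left, leaving
\[
(M^{-1}-C')\int_\Omega g\,e^{1/u}b_k(u)\psi^{2\beta+q}\,dx\le C\sum_{i=1}^N\int_\Omega u^{p_i-\alpha-1}\psi^{2\beta+q-p_i}|\psi_i|^{p_i}\,dx.
\]
Letting $k\to\infty$, Fatou's lemma together with the pointwise convergence $b_k(u)\to u^{-\alpha}$ and the elementary bound $ue^{1/u}\ge c_M:=\min\{e,\,Me^{1/M}\}>0$ (so that $e^{1/u}u^{-\alpha}=ue^{1/u}\,u^{-(2\beta+q)}\ge c_M u^{-(2\beta+q)}$) converts the left-hand side into a multiple of $\int_\Omega g\,(\psi/u)^{2\beta+q}\,dx$, giving
\[
c_0\int_\Omega g\Bigl(\frac{\psi}{u}\Bigr)^{2\beta+q}\,dx\le C\sum_{i=1}^N\int_\Omega u^{p_i-2\beta-q}\psi^{2\beta+q-p_i}|\psi_i|^{p_i}\,dx.
\]

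The second step is to estimate each term on the right by Hölder with the conjugate pair $\theta_i=\frac{2\beta+q}{2\beta+q-p_i}$, $\theta_i'=\frac{2\beta+q}{p_i}$ (both exceed $1$ because $\beta>l_1$ forces $2\beta+q>p_N\ge p_i$), splitting the integrand as $\bigl[g\,u^{-(2\beta+q)}\psi^{2\beta+q}\bigr]^{1/\theta_i}\cdot\bigl[g^{-1/\theta_i}|\psi_i|^{p_i}\bigr]$. A short exponent count shows that in the second factor the powers of $u$ and of $\psi$ cancel \emph{exactly} (this cancellation is the reason for applying Lemma~\ref{a-priori} to $\psi^{t}$ rather than to $\psi$), and since $p_i\theta_i'=2\beta+q$ and $g\ge c>0$ one gets $\int_\Omega(g^{-1/\theta_i}|\psi_i|^{p_i})^{\theta_i'}\,dx\le c^{-\theta_i'/\theta_i}\int_\Omega|\psi_i|^{2\beta+q}\,dx$. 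Hence each right-hand term is bounded by $C\bigl(\int_\Omega g(\psi/u)^{2\beta+q}\,dx\bigr)^{1/\theta_i}\bigl(\int_\Omega|\psi_i|^{2\beta+q}\,dx\bigr)^{1/\theta_i'}$; inserting this and applying Young's inequality with exponents $\theta_i,\theta_i'$ and a small parameter lets me absorb the $\int_\Omega g(\psi/u)^{2\beta+q}$ contributions into the left-hand side, which yields exactly \eqref{cac2}.

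The step I expect to be most delicate is the absorption in the second paragraph: it forces the sharp constant $\frac{(\alpha-1)^2(N(q-1)+\epsilon)}{4\alpha(1-\epsilon)}$ of Lemma~\ref{a-priori} to be strictly beaten by $M^{-1}$, and it is exactly this requirement that is packaged into the hypothesis $M\in J$ — with $\beta<l_3$ supplying $M^{-1}>C'$ and $\beta>l_1$ supplying both $\alpha>p_N-1$ and $2\beta+q>p_N$ (hence $\theta_i,\theta_i'>1$). The remaining care is the bookkeeping of finiteness: the Young absorption needs $\int_\Omega g(\psi/u)^{2\beta+q}\,dx<\infty$, which one secures by carrying the whole argument at the level of the truncations $b_k$ — where $\int_\Omega g\,e^{1/u}b_k(u)\psi^{2\beta+q}\,dx$ is finite because $b_k$ is bounded and $f(u)=-e^{1/u}\in L^1_{loc}$, $g\in L^1_{loc}$ — and passing to the limit through Fatou's lemma only at the very end.
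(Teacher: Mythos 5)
Your proposal is correct and follows essentially the same route as the paper's proof: specialize Lemma~\ref{a-priori} to $f(u)=-e^{1/u}$, use $uf'(u)=u^{-1}e^{1/u}\geq M^{-1}e^{1/u}$ and $\beta<l_3$ (so that $M^{-1}$ beats the constant $\frac{(\alpha-1)^2(N(q-1)+\epsilon)}{4\alpha(1-\epsilon)}$ for small $\epsilon$), pass $k\to\infty$ in $b_k$, replace $\psi$ by $\psi^{(2\beta+q)/q}$, and absorb via Young's inequality with the conjugate pair $\frac{2\beta+q}{2\beta+q-p_i}$, $\frac{2\beta+q}{p_i}$ together with $g\geq c$. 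Your only deviations are cosmetic or improvements — H\"older-then-Young on integrals instead of the paper's pointwise Young, $ue^{1/u}\geq c_M$ instead of $e^{1/u}\geq 1/u$, Fatou instead of monotone convergence, and an explicit flag of the finiteness needed for the absorption step, a point the printed proof (which also writes $l_2$ where $l_3$ is meant) silently glosses over.
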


\begin{proof}[Proof of Lemma $\ref{a-priori}$] Let $u\in W_{loc}^{1,p_i}(\Omega)$ be a positive stable solution to the equation $(\ref{main equation})$ and $\psi\in C_c^{1}(\Omega)$ be nonnegative in $\Omega.$ Then $u$ satisfies both the equations $(\ref{weak solution})$ and $(\ref{stable solution}).$ We prove the lemma into the following two steps.
\smallskip

\noindent\textbf{Step 1.}
Choosing $\phi=b_k(u)\psi^q$ as a test function in $(\ref{weak solution}),$ we have
\begin{equation}\label{e11}
\begin{aligned}
&\sum_{i=1}^{N}\int_{\Omega} |b_{k}'(u)||u_i|^{p_i}\psi^q\,dx\\
&\leq q\sum_{i=1}^{N}\int_{\Omega} \psi^{q-1}b_{k}(u)|u_i|^{p_i-2}u_{i}\psi_{i}\,dx-\int_{\Omega}g(x)f(u)b_k(u)\psi^q\,dx.
\end{aligned}
\end{equation}
Using Young's inequality with $\epsilon\in(0,1)$, we obtain
\begin{align*}
&q\sum_{i=1}^{N}\int_{\Omega} \psi^{q-1}b_{k}(u)|u_i|^{p_i-2}u_i\psi_{i}\,dx\\
&\leq\epsilon\sum_{i=1}^{N}\int_{\Omega}|b_{k}'(u)||u_i|^{p_i}\psi^q\,dx+C\sum_{i=1}^{N}\int_{\Omega}{b_k(u)}^{p_i}|b_{k}'(u)|^{1-p_i}|\psi_{i}|^{p_i}\psi^{q-p_i}\,dx,
\end{align*}
for some positive constant depending $C=C_{\epsilon}(p_1,p_2,\cdots,p_N,q).$

Therefore for $\epsilon\in(0,1)$, we obtain
\begin{equation}\label{westimate}
\begin{aligned}
&(1-\epsilon)\sum_{i=1}^{N}|b_{k}'(u)||u_i|^{p_i}\psi^{q}\,dx\\
&\leq C\sum_{i=1}^{N}\int_{\Omega}{b_{k}(u)}^{p_i}|b_{k}'(u)|^{1-p_i}|\psi_i|^{p_i}\psi^{q-p_i}\,dx-\int_{\Omega} g(x) f(u)b_{k}(u)\psi^q\,dx.
\end{aligned}
\end{equation}

\textbf{Step 2.}
Choosing $\phi=a_{k}(u)\psi^\frac{q}{2}$ in the inequality $(\ref{stable solution})$, we obtain
\begin{equation}\label{sestimate}
\begin{gathered}
\int_{\Omega} g(x) f'(u)a_{k}(u)^2 \psi^q\,dx
\leq \sum_{i=1}^{N}(p_i-1)\left(X_i+\frac{q^2}{4} Y_i+q Z_i\right),
\end{gathered}
\end{equation}
where
$$
X_i=\int_{\Omega}|a_{k}'(u)|^2|u_i|^{p_i}\psi^q\,dx,\, Y_i=\int_{\Omega}\psi^{q-2}a_{k}(u)^2|u_i|^{p_i-2}|\psi_i|^2\,dx,
$$ and
$$
Z_i=\int_{\Omega}|a_{k}'(u)|a_{k}(u)\psi^{q-1}|u_i|^{p_i-1}|\psi_{i}|\,dx.
$$
Using c. noting that 
$$
X_i=\frac{(\alpha-1)^2}{4\alpha}\int_{\Omega}|b_{k}'(u)||u_i|^{p_i}\psi^q\,dx,
$$
from the estimate $(\ref{westimate})$, we obtain 
\begin{equation*}
\begin{aligned}
&\sum_{i=1}^{N}X_i=\frac{(\alpha-1)^2}{4\alpha}\sum_{i=1}^{N}\int_{\Omega}|b_{k}'(u)||u_i|^{p_i}\psi^q\,dx\\
&\leq \frac{(\alpha-1)^2}{4\alpha(1-\epsilon)}\left\{C\sum_{i=1}^{N}\int_{\Omega}{b_{k}(u)}^{p_i}|b_{k}'(u)|^{1-p_i}|\psi_i|^{p_i}\psi^{q-p_i}\,dx-\int_{\Omega} g(x) f(u)b_{k}(u)\psi^q\,dx\right\}.
\end{aligned}
\end{equation*}
Moreover, using Young's inequality we have
\begin{align*}
&(p_i-1)\frac{q^2}{4}Y_i\\
&=(p_i-1)\frac{q^2}{4}\int_{\Omega}\psi^{q-2}a_{k}(u)^2|u_i|^{p_i-2}|\psi_i|^2\,dx\\
&=(p_i-1)\frac{q^2}{4}\int_{\Omega}\left(|u_i|^{p_i-2}|a_{k}'(u)|^\frac{2(p_i-2)}{p_i}\psi^\frac{q(p_i-2)}{p_i}\right)\left(a_{k}(u)^2|a_{k}'(u)|^\frac{2(2-p_i)}{p_i}|\psi_i|^2\psi^\frac{2(q-p_i)}{p_i}\right)\,dx\\
&\leq \frac{\epsilon}{2N}X_i+\frac{C}{2}\int_{\Omega}a_k(u)^{p_i}|a_{k}'(u)|^{2-p_i}|\psi_i|^{p_i}\psi^{q-p_i}\,dx,
\end{align*}
and
\begin{align*}
&(p_i-1)qZ_i\\
&=(p_i-1)q\int_{\Omega}|a_{k}'(u)|a_{k}(u)\psi^{q-1}|u_i|^{p_i-1}|\psi_{i}|\,dx\\
&=(p_i-1)q\int_{\Omega}\left(|u_i|^{p_i-1}|a_{k}'(u)|^\frac{2}{p_{i}'}\psi^\frac{q}{p_{i}'}\right)\left(a_k(u)|a_{k}'(u)|^{\frac{2-p_i}{p_i}}|\psi|^{p_i}\psi^{q-p_i}\right)\,dx\\
&\leq \frac{\epsilon}{2N}X_i+\frac{C}{2}\int_{\Omega}a_{k}(u)^{p_i}|a_{k}'(u)|^{2-p_i}|\psi_i|^{p_i}\psi^{q-p_i}\,dx
\end{align*}
for some positive constant $C=C_{\epsilon}(p_1,p_2,\cdots,p_N,q,N).$

Using the above estimates in $(\ref{sestimate})$ together with a. and b. we obtain
\begin{gather*}
\int_{\Omega}g(x) uf'(u)b_{k}(u)\psi^q\,dx\leq \int_{\Omega}g(x) f'(u)a_{k}(u)^{2}\psi^q\,dx\\
\leq \sum_{i=1}^{N}\Big(p_i-1+\frac{\epsilon}{N}\Big)X_i+C\sum_{i=1}^{N}\int_{\Omega}a_{k}(u)^{p_i}|a_{k}'(u)|^{2-p_i}|\psi_i|^{p_i}\psi^{q-p_i}\,dx\\
\leq\Big(p_1-1+\frac{\epsilon}{N}\Big)\sum_{i=1}^{N}X_i+\Big(p_2-1+\frac{\epsilon}{N}\Big)\sum_{i=1}^{N}X_i+\cdots+\Big(p_N-1+\frac{\epsilon}{N}\Big)\sum_{i=1}^{N}X_i \\+ C\sum_{i=1}^{N}\int_{\Omega}a_{k}(u)^{p_i}|a_{k}'(u)|^{2-p_i}|\psi_i|^{p_i}\psi^{q-p_i}\,dx\\ 
=\Big(N(q-1)+\epsilon\Big)\sum_{i=1}^{N}X_i+C\sum_{i=1}^{N}\int_{\Omega}a_{k}(u)^{p_i}|a_{k}'(u)|^{2-p_i}|\psi_i|^{p_i}\psi^{q-p_i}\,dx\\
\leq \frac{(\alpha-1)^2\big(N(q-1)+\epsilon\big)}{4\alpha(1-\epsilon)}\Big\{C\sum_{i=1}^{N}\int_{\Omega}{b_{k}(u)}^{p_i}|b_{k}'(u)|^{1-p_i}|\psi_i|^{p_i}\psi^{q-p_i}\,dx\\-\int_{\Omega}g(x) f(u)b_{k}(u)\psi^q\,dx\Big\} + C\sum_{i=1}^{N}\int_{\Omega}a_{k}(u)^{p_i}|a_{k}'(u)|^{2-p_i}|\psi_i|^{p_i}\psi^{q-p_i}\,dx\\
\leq C\sum_{i=1}^{N}\int_{\Omega}\Big\{b_{k}(u)^{p_i}|b_{k}'(u)|^{1-p_i}+a_{k}(u)^{p_i}|a_{k}'(u)|^{2-p_i}\Big\}|\psi_i|^{p_i}\psi^{q-p_i}\,dx\\ - \frac{(\alpha-1)^2(N(q-1)+\epsilon)}{4\alpha(1-\epsilon)}\int_{\Omega} g(x)f(u)b_{k}(u)\psi^q\,dx\\
\leq C\sum_{i=1}^{N}\int_{\Omega}u^{p_i-\alpha-1}|\psi_i|^{p_i}\psi^{q-p_i}\,dx\\-\frac{(\alpha-1)^2(N(q-1)+\epsilon)}{4\alpha(1-\epsilon)}\int_{\Omega} g(x) f(u)b_{k}(u)\psi^q\,dx
\end{gather*}
for some positive constant $C=C_{\epsilon}(p_1,\cdots,p_N,q,N,\alpha).$
\end{proof}

\begin{proof}[Proof of Corollary \ref{cor1}] Let $u\in W_{loc}^{1,p_i}(\Omega)$ be a positive stable solution to the problem $\eqref{main equation}_s$. Observe that the fact $\beta>l_1$ implies $\alpha=2\beta+q-1>p_N-1$. Then by Lemma \ref{a-priori}, using the fact $0<\delta\leq\gamma$ and $f(u)=-u^{-\delta}-u^{-\gamma}$ in the inequality \eqref{apriorieqn}, for some $C=C_{\epsilon}(p_1,\cdots,p_N,q,N,\alpha)$ we obtain 
\begin{align*}
\alpha_{\epsilon}\int_{\Omega}g(x)(u^{-\delta}+u^{-\gamma})b_{k}(u)\psi^q\,dx\leq C\sum_{i=1}^{N}\int_{\Omega}u^{p_i-\alpha-1}|\psi_i|^{p_i}\psi^{q-p_i}\,dx,
\end{align*}
where $\alpha_{\epsilon}=\delta-\frac{(\alpha-1)^2(N(q-1)+\epsilon)}{4\alpha(1-\epsilon)}.$ Observe that
$$
\lim_{\epsilon\to 0}\alpha_{\epsilon}=\delta-\frac{N(q-1)(\alpha-1)^2}{4\alpha}>0,\,\,\forall\,\,\beta\in(l_1,l_2).
$$
Hence we can fix $\beta\in(l_1,l_2)$ and choose $\epsilon\in(0,1)$ such that $\alpha_{\epsilon}>0.$ As a consequence we have
\begin{equation}\label{eqn1}
\int_{\Omega}g(x)(u^{-\delta}+u^{-\gamma})b_{k}(u)\psi^q\,dx\leq C\sum_{i=1}^{N}\int_{\Omega}u^{p_i-2\beta-q}|\psi_i|^{p_i}\psi^{q-p_i}\,dx
\end{equation}for some positive constant $C=C(p_1,\cdots,p_N,q,N,\alpha).$
\begin{enumerate}
\item[(1)] Since $\delta<\gamma$ and $0<u\leq 1$ a.e. in $\Omega$, for any $\beta\in(l_1,l_2)$ the inequality \eqref{eqn1} becomes
\begin{align*}
\int_{\Omega}g(x)u^{-\delta}b_k(u)\psi^q\,dx\leq C\sum_{i=1}^{N}\int_{\Omega}|u|^{p_i-2\beta-q}|\psi_i|^{p_i}\psi^{q-p_i}\,dx.
\end{align*}
By the monotone convergence theorem we obtain
\begin{align*}
\int_{\Omega}g(x)u^{-2\beta-\delta-q+1}\psi^q\,dx\leq C\sum_{i=1}^{N}\int_{\Omega}|u|^{p_i-2\beta-q}|\psi_i|^{p_i}\psi^{q-p_i}\,dx.
\end{align*}
Replacing $\psi$ by $\psi^\frac{2\beta+\delta+q-1}{q}$ and using the Young's inequality for $\epsilon\in (0,1)$ with the exponents $\theta_i=\frac{2\beta+\delta+q-1}{2\beta+q-p_i}$, $\theta_{i}{'}=\frac{2\beta+\delta+q-1}{\delta+p_i-1}$ in the above inequality we obtain
\begin{equation*}
\begin{aligned}
&\int_{\Omega} g(x)(\frac{\psi}{u})^{2\beta+\delta+q-1} \,dx\\
&\leq C\sum_{i=1}^{N}\int_{\Omega} u^{p_i-2\beta-q} \psi^{2\beta+\delta+q-p_i-1}|\psi_i|^{p_i} \,dx\\
&=C\sum_{i=1}^{N}\int_{\Omega} \big((\frac{\psi}{u})^{2\beta+q-p_i}\big)
\big(\psi^{\delta-1}|\psi_i|^{p_i}\big) \,dx\\
&\leq\epsilon\int_{\Omega} g(x)(\frac{\psi}{u})^{2\beta+\delta+q-1} \,dx+C\sum_{i=1}^{N}\int_{\Omega} g^{-\frac{\theta_{i}^{'}}{\theta_i}}\psi^{(\delta-1)\theta_i{'}}|\psi_i|^{p_{i}\theta_i^{'}} \,dx.
\end{aligned}
\end{equation*}

Using $\delta\geq 1$ and choosing $0\leq\psi\leq 1$ in $\Omega$ together with the fact $g\geq c$ we obtain
$$
\int_{\Omega} g(x)(\frac{\psi}{u})^{2\beta+\delta+q-1} \,dx\leq C\sum_{i=1}^{N}\int_{\Omega} |\psi_i|^{p_{i}\theta_i^{'}} \,dx,
$$
for some positive constant $C=C(p_1,\cdots,p_N,q,N,\beta).$
\item[(2)] Since $\delta<\gamma$ and $u\geq 1$ a.e. in $\Omega$, for any $\beta\in(l_1,l_2)$ the inequality \eqref{eqn1} becomes
\begin{align*}
\int_{\Omega}g(x)u^{-\gamma}b_k(u)\psi^q\,dx\leq C\sum_{i=1}^{N}\int_{\Omega}|u|^{p_i-2\beta-q}|\psi_i|^{p_i}\psi^{q-p_i}\,dx.
\end{align*}
By the monotone convergence theorem we obtain
\begin{align*}
\int_{\Omega}g(x)u^{-2\beta-\gamma-q+1}\psi^q\,dx\leq C\sum_{i=1}^{N}\int_{\Omega}|u|^{p_i-2\beta-q}|\psi_i|^{p_i}\psi^{q-p_i}\,dx.
\end{align*}
Replacing $\psi$ by $\psi^\frac{2\beta+\gamma+q-1}{q}$ and using the Young's inequality for $\epsilon\in (0,1)$ with the exponents $\zeta_i=\frac{2\beta+\gamma+q-1}{2\beta+q-p_i}$, $\zeta_{i}{'}=\frac{2\beta+\gamma+q-1}{\gamma+p_i-1}$ in the above inequality we obtain
\begin{equation*}
\begin{aligned}
&\int_{\Omega} g(x)(\frac{\psi}{u})^{2\beta+\gamma+q-1} \,dx\\
&\leq C\sum_{i=1}^{N}\int_{\Omega} u^{p_i-2\beta-q} \psi^{2\beta+\gamma+q-p_i-1}|\psi_i|^{p_i} \,dx\\
&=C\sum_{i=1}^{N}\int_{\Omega} \big((\frac{\psi}{u})^{2\beta+q-p_i}\big)
\big(\psi^{\gamma-1}|\psi_i|^{p_i}\big) \,dx\\
&\leq\epsilon\int_{\Omega} g(x)(\frac{\psi}{u})^{2\beta+\gamma+q-1} \,dx+C\sum_{i=1}^{N}\int_{\Omega} g^{-\frac{\zeta_{i}^{'}}{\zeta_i}}\psi^{(\gamma-1)\zeta_{i}^{'}}|\psi_i|^{p_{i}\zeta_{i}^{'}} \,dx.
\end{aligned}
\end{equation*}

Using $\gamma\geq 1$ and choosing $0\leq\psi\leq 1$ in $\Omega$ together with the fact $g\geq c$ we obtain
$$
\int_{\Omega} g(x)(\frac{\psi}{u})^{2\beta+\gamma+q-1} \,dx\leq C\sum_{i=1}^{N}\int_{\Omega} |\psi_i|^{p_{i}\zeta_i^{'}} \,dx,
$$
for some positive constant $C=C(p_1,\cdots,p_N,q,N,\beta).$
\item[(3)] Since $\delta=\gamma\geq 1$ and $u>0$ a.e. in $\Omega$, for any $\beta\in(l_1,l_2)$ the inequality \eqref{eqn1} becomes
\begin{align*}
\int_{\Omega}g(x)u^{-\delta}b_k(u)\psi^q\,dx\leq C\sum_{i=1}^{N}\int_{\Omega}|u|^{p_i-2\beta-q}|\psi_i|^{p_i}\psi^{q-p_i}\,dx.
\end{align*}
Now proceeding similarly as in Case (1) we obtain the required estimate.
\end{enumerate}
\end{proof}

\begin{proof}[Proof of Corollary \ref{cor2}]
Assume $M\in J$ and let $u\in W_{loc}^{1,p_i}(\Omega)$ be such that $0<u\leq M$ a.e. in $\Omega$ is a positive stable solution of the equation $(\ref{main equation})_e.$ Let $\beta\in(l_1,l_3)$ and define $\alpha=2\beta+q-1$. Observe that the fact $\beta>l_1$ implies $\alpha>p_N-1$. Therefore we can apply Lemma \ref{a-priori} to choose $f(u)=-e^\frac{1}{u}$ and use the assumption $0<u\leq M$ a.e. in $\Omega$ in the estimate (\ref{apriorieqn}) and obtain
\begin{align*}
\alpha_{\epsilon}\int_{\Omega}g(x) e^\frac{1}{u}b_{k}(u)\psi^q\,dx\leq C\sum_{i=1}^{N}\int_{\Omega}u^{p_i-\alpha-1}|\psi_i|^{p_i}\psi^{q-p_i}\,dx,
\end{align*}
for some positive constant $C=C_{\epsilon}(p_1,\cdots,p_N,q,N,\alpha)$ where $\alpha_{\epsilon}=\frac{1}{M}-\frac{(\alpha-1)^2\big(N(q-1)+\epsilon\big)}{4\alpha(1-\epsilon)}.$ Observe that
$$
\lim_{\epsilon\to 0}\alpha_{\epsilon}=\frac{1}{M}-\frac{N(q-1)(\alpha-1)^2}{4\alpha}>0,\,\,\forall\,\,\beta\in(l_1,l_3).
$$
Hence we can fix $\beta\in(l_1,l_3)$ and choose $\epsilon\in(0,1)$ such that $\alpha_{\epsilon}>0.$
Using $e^x>x$ for $x>0$, in the above estimate we obtain
$$
\int_{\Omega}g(x)\frac{1}{u}b_{k}(u)\psi^q\,dx\leq \int_{\Omega}g(x) e^\frac{1}{u}b_{k}(u)\psi^q\,dx\leq C\sum_{i=1}^{N}\int_{\Omega}u^{p_i-2\beta-q}|\psi_i|^{p_i}\psi^{q-p_i}\,dx,
$$
for some positive constant $C=C(\beta,p_1,\cdots,p_N,q,N).$ By the monotone convergence theorem we obtain
$$
\int_{\Omega}g(x)u^{-2\beta-q}\psi^q\,dx\leq C\sum_{i=1}^{N}\int_{\Omega}u^{p_i-2\beta-q}|\psi_i|^{p_i}\psi^{q-p_i}\,dx.
$$
Replacing $\psi$ by $\psi^\frac{2\beta+q}{q}$ and using the Young's inequality for $\epsilon\in (0,1)$ with exponents $\gamma_i=\frac{2\beta+q}{2\beta+q-p_i}$, $\gamma_{i}{'}=\frac{2\beta+q}{p_i}$ in the above inequality we obtain
\begin{equation*}
\begin{aligned}
&\int_{\Omega} g(x)(\frac{\psi}{u})^{2\beta+q} \,dx\\
&\leq C\sum_{i=1}^{N}\int_{\Omega} (\frac{\psi}{u})^{2\beta+q-p_i}|\psi_i|^{p_i}\,dx\\
&\leq\epsilon\int_{\Omega} g(x) (\frac{\psi}{u})^{2\beta+q} \,dx+C\sum_{i=1}^{N}\int_{\Omega} {g^{-\frac{\gamma_i{'}}{\gamma_i}}}|\psi_i|^{2\beta+q} \,dx.
\end{aligned}
\end{equation*}
Therefore, using the fact that $g\geq c$, we have
$$
\int_{\Omega} g(x) (\frac{\psi}{u})^{2\beta+q} \,dx
\leq C\sum_{i=1}^{N}\int_{\Omega}|\psi_i|^{2\beta+q} \,dx,
$$
for some positive constant $C=C(\beta,p_1,\cdots,p_N,q,N).$
\end{proof}
\subsection{Proof of the main results}
\begin{proof}[Proof of Theorem \ref{smain1}] Let $u\in W_{loc}^{1,p_i}(\Omega)$ be a stable solution of the equation $(\ref{main equation})_s$ such that $0<u\leq 1$ a.e. in $\Omega$. Then by Corollary \ref{cor1} we have
 \begin{equation*}
\int_{\Omega}g(x)(\frac{\psi}{u})^{2\beta+\delta+q-1}\,dx\leq C\sum_{i=1}^{N}\int_{\Omega}|\psi_{i}|^{p_i\theta_i^{'}}\,dx.
\end{equation*}
Choosing $\psi=\psi_R$ in the above inequality we obtain
\begin{equation}\label{main1ineq}
\int_{B_{R}(0)} g(x)(\frac{1}{u})^{2\beta+\delta+q-1} \,dx\leq C\sum_{i=1}^{N}R^{N-p_i{\theta_{i}'}},
\end{equation}
for some positive constant $C$ independent of $R$.
Observe that, $$\lim_{\beta\to l_2}(N-p_{i}\theta_{i}')=N-\frac{p_i(2l_{2}+\delta+q-1)}{\delta+p_{i}-1}<0$$ which follows from the assumption $\delta\in I$, since $\delta>\frac{N^2(q-1)(p_i-1)}{p_i(N(q-1)+4)-N^2(q-1)}$ for all $i=1,2,\cdots,N.$ As a consequence, we can choose $\beta\in(l_1,l_2)$, such that $N-p_{i}\theta_{i}'<0$ for all $i$. 
Therefore, letting $R\to\infty$ in $(\ref{main1ineq})$, we obtain
$$
\int_{\Omega} g(x)(\frac{1}{u})^{2\beta+\delta+q-1} \,dx=0,
$$
which is a contradiction.
\end{proof}

\begin{proof}[Proof of Theorem \ref{smain2}] Let $u\in W_{loc}^{1,p_i}(\Omega)$ be a stable solution of the equation $(\ref{main equation})_s$ such that $u\geq 1$ a.e. in $\Omega$. Then by Corollary \ref{cor1} we have
\begin{equation*}
\int_{\Omega}g(x)(\frac{\psi}{u})^{2\beta+\gamma+q-1}\,dx\leq C\sum_{i=1}^{N}\int_{\Omega}|\psi_{i}|^{p_i\zeta_i^{'}}\,dx.
\end{equation*}
Choosing $\psi=\psi_R$ in the above inequality we obtain
\begin{equation}\label{main1ineqnew}
\int_{B_{R}(0)} {g(x)}(\frac{1}{u})^{2\beta+\gamma+q-1} \,dx\leq C\sum_{i=1}^{N}R^{N-p_i{\zeta_{i}'}},
\end{equation}
for some positive constant $C$ independent of $R$.
Observe that, $$\lim_{\beta\to l_2}(N-p_{i}\zeta_{i}')=N-\frac{p_i(2l_{2}+\gamma+q-1)}{\delta+p_{i}-1}<0$$ which follows from the assumption $\gamma\in I$, since $\gamma>\frac{N^2(q-1)(p_i-1)}{p_i(N(q-1)+4)-N^2(q-1)}$ for all $i=1,2,\cdots,N.$ As a consequence, we can choose $\beta\in(l_1,l_2)$, such that $N-p_{i}\zeta_{i}'<0$ for all $i$. 

Therefore, letting $R\to\infty$ in $(\ref{main1ineqnew})$, we obtain
$$
\int_{\Omega} g(x)(\frac{1}{u})^{2\beta+\gamma+q-1} \,dx=0,
$$
which is a contradiction.
\end{proof}

\begin{proof}[Proof of Theorem \ref{smain3}] Let $u\in W_{loc}^{1,p_i}(\Omega)$ be a positive stable solution of the equation $(\ref{main equation})_s$. Then by Corollary \ref{cor1} we have
 \begin{equation*}
\int_{\Omega}g(x)(\frac{\psi}{u})^{2\beta+\delta+q-1}\,dx\leq C\sum_{i=1}^{N}\int_{\Omega}|\psi_{i}|^{p_i\theta_i^{'}}\,dx.
\end{equation*}
Now proceeding similarly as in Theorem \ref{smain1} we obtain
$$
\int_{\Omega} g(x)(\frac{1}{u})^{2\beta+\delta+q-1} \,dx=0,
$$
which is a contradiction.
\end{proof}

\begin{proof}[Proof of Theorem \ref{emain}]
Let $u\in W_{loc}^{1,p_i}(\Omega)$ be a stable solution to the problem $\eqref{main equation}_{e}$ such that $0<u\leq M$ a.e. in $\Omega.$ Then by Corollary \ref{cor2} we have
$$
\int_{\Omega}g(x)(\frac{\psi}{u})^{2\beta+q} \,dx\leq C\sum_{i=1}^{N}\int_{\Omega}|\psi_i|^{2\beta+q} \,dx.
$$
Choosing $\psi=\psi_R$ in the above inequality we obtain
\begin{equation}\label{e2}
\int_{B_{R}(0)}g(x)(\frac{1}{u})^{2\beta+q} \,dx\leq CR^{N-2\beta-q},
\end{equation}
where $C$ is a positive constant independent of $R.$
Observe that, since $M\in J$ we have $0<M<\frac{4}{N(N-1)(q-1)}$ which implies $N<2l_3+q$ and hence 
$$\lim_{\beta\to l_3}(N-2\beta-q)=N-2l_3-q<0.$$ As a consequence, we can choose $\beta\in(l_1,l_3)$ such that $N-2\beta-q<0.$ \\

Therefore, letting $R\to\infty$ in $(\ref{e2})$, we obtain
$$
\int_{\Omega} {g(x)}(\frac{1}{u})^{2\beta+q} \,dx=0,
$$
which is a contradiction. Hence the Theorem follows.
\end{proof}
\section*{Acknowledgements}
The author thanks Professor Adimurthi for some fruitful discussion on the topic. The author thanks TIFR CAM, Bangalore for the financial support.

\end{document}